\numberwithin{equation}{section}
\newtheorem{theorem}{Theorem}[section]
\newtheorem{corollary}[theorem]{Corollary}
\newtheorem{lemma}[theorem]{Lemma}
\newtheorem{remark}[theorem]{Remark}
\newtheorem{example}{Example}
\newtheorem{definition}[theorem]{Definition}
\begin{document}

\title[Random gap processes and asymptotically complete sequences]{Random gap processes and asymptotically complete sequences}

\author[E. Crossen Brown]{Erin Crossen Brown}
\address{E. Crossen Brown, Department of Mathematics, 
University of Rochester,
500 Joseph C. Wilson Blvd., Rochester, NY 14627}
\email{ecrossen@ur.rochester.edu}

\author[S. Mkrtchyan]{Sevak Mkrtchyan}
\address{S. Mkrtchyan, Department of Mathematics, 
University of Rochester,
500 Joseph C. Wilson Blvd., Rochester, NY 14627}
\email{sevak.mkrtchyan@rochester.edu}

\author[J. Pakianathan]{Jonathan Pakianathan}
\address{J. Pakianathan, Department of Mathematics, 
University of Rochester,
500 Joseph C. Wilson Blvd., Rochester, NY 14627}
\email{jonathan.pakianathan@rochester.edu}

\date{}

\thanks{
The second listed author was partially supported by the Simons Foundation Collaboration Grant No. 422190.}

\begin{abstract}
We study a process of generating random positive integer weight sequences $\{ W_n \}$ where the gaps between the weights $\{ X_n = W_n - W_{n-1} \}$ are i.i.d. positive integer-valued random 
variables. We show that as long as the gap distribution has finite $\frac{1}{2}$-moment, almost surely, the resulting weight sequence is asymptotically complete, i.e., all large enough multiples of the gcd of the possible gap values can be written as a sum of distinct weights. We then show a much stronger result that if the gap distribution has a moment generating function with large enough radius of convergence, then every large enough multiple of the gcd of gap values can be written as a sum of $m$ distinct weights for any fixed $m \geq 2$.

\noindent
{\it Keywords: Complete sequences, additive combinatorics, additive number theory, random gap processes.}

\noindent
2010 {\it Mathematics Subject Classification:} Primary: 05A17, 60C05. Secondary: 11P70, 11P81.

\end{abstract}

\maketitle

\setcounter{tocdepth}{1}
\tableofcontents
\setcounter{tocdepth}{3}

\section{Introduction}

This paper studies certain random gap processes and shows that with probability $1$ they generate well-distributed, asymptotically $m$-complete sequences for any $m \geq 2$. This directly relates to issues in number theory and topology, as we will elaborate below.

A {\bf complete sequence} is a non-decreasing sequence of positive integers $a_1 \leq a_2 \leq a_3 \leq \dots$ such that every positive integer can be written as an index-distinct sum of some of the $a_k$ (in a not necessarily unique way). It was shown in \cite{Hons} and \cite{Brown} that a sequence is complete if and only if $a_1=1$ and $a_{n+1} \leq \sum_{k=1}^n a_k +1$ for all $n \geq 1$. 
Examples of complete sequences are: 
\begin{enumerate}
\item The sequence consisting of $1$ followed by the prime numbers listed in order of size. This follows by Bertrand's postulate that $p_{n+1} < 2p_n$ where $p_n$ is the nth prime. 
\item The powers of $2$, $a_n=2^{n-1}$, because every positive integer has a unique $2$-adic expansion. 
\item The Fibonacci numbers. Every positive integer can be written as a sum of distinct Fibonacci numbers by the so called Zeckendorf representation, and this representation is unique as long as consecutive Fibonacci numbers are never used.
\end{enumerate}

\begin{definition}
Fix $k$, a positive integer. An {\bf asymptotically $k$-complete sequence} is a non-decreasing sequence of positive integers $a_1 \leq a_2 \leq \dots$ with $d=gcd(a_1,a_2,\dots)$ such that there exists a positive integer $n_0$ so 
that for all integers $n \geq n_0$, we have that $nd$ is a $k$-fold sum of index-distinct $a_j$. Similarly, we will say a sequence is {\bf asymptotically $\leq k$-complete} if every large enough multiple of its gcd is a sum of $k$ or fewer index-distinct $a_j$.

A sequence of positive integers is {\bf asymptotically complete} if every large enough multiple of its gcd can be written as a sum of index-distinct $a_j$. We use the adjective ``weakly'' if any of the previous conditions hold after dropping the index-distinct requirement. 
\end{definition}

Note that if we have an asymptotically $k$-complete sequence with greatest common divisor $d$, we may divide all terms of the sequence by $d$ to get a new asymptotically $k$-complete sequence 
that represents every large enough integer as a sum of $k$ index-distinct terms of the sequence.  It also follows that scaling an asymptotically $k$-complete sequence by a positive integer 
yields another sequence of the same type. Also note that if a sequence starts with $1$, its gcd is automatically $1$.

The reader is warned that the definition of a complete integer sequence can be slightly different in different parts of the literature (some places have the word asymptotically implied and some don't - some don't build the gcd into the definition, but we do as it is more convenient for us to do so and seems more natural).

Examples: 
\begin{enumerate}[(i)]
\item The sequence of prime numbers ordered by size is asymptotically complete as every integer greater than 6 is a sum of distinct primes (See \cite{Riddell}).
\item Vinogradov~\cite{Vino} proved that every large enough odd positive integer is a sum of three primes (not necessarily distinct), and Helfgott~\cite{Helfgott} improved this to cover every odd integer $> 5$ and hence proved what 
was called the weak Goldbach conjecture. The actual Goldbach conjecture, which is still open, conjectures that every even integer $> 3$ is a sum of two (not necessarily distinct) primes. Thus, the sequence of prime numbers is conjecturally asymptotically weakly $\leq 3$-complete. 
\item Lagrange showed that the sequence of integer squares $1,4,9, \dots, n^2, \dots$ is weakly $\leq 4$-complete, i.e., every positive integer is a sum of 4 or fewer positive integer squares. Waring conjectured that for every $k \geq 2$, there is an $\alpha(k)$ such that the sequence of $k$th powers $1,2^k, 3^k, \dots$ is weakly $\leq \alpha(k)$-complete, i.e., that every positive integer is a sum of $\alpha(k)$ or fewer positive integer $k$th powers. Hilbert showed the existence of $\alpha(k)$ and its optimal/minimal value was determined by the further work of many mathematicians. 
\end{enumerate}

In this paper, we will study a certain probabilistic process that generates increasing sequences of positive integers, by generating a sequence $X_1, X_2, \dots$ of independent, identically distributed, 
positive integer valued, gap random variables and forming an increasing sequence of weights $W_1 < W_2 < \dots $ via $W_n = \sum_{k=1}^n X_k$. 

We show that in general, as long as the gap distribution has finite $\frac{1}{2}$-moment, that almost surely the resulting sequence of weights is asymptotically complete. 

We further show, given stronger conditions on the gap distribution, that with probability 1, the resulting sequence of weights is an asymptotically $k$-complete sequence for every fixed $k \geq 2$. 

We also prove a weak modular equidistributivity law for the resulting sequence of weights if the gcd is $1$. The weak modular equidistributivity states that if one fixes a modulus $M \geq 2$, then with probability 1, $$\lim_{N \to \infty} P(W_{k+N} = i \text{ mod } M | W_k = j \text{ mod } M ) = \frac{1}{M}$$ regardless of the value of $i, j \in \mathbb{Z}/M\mathbb{Z}$ and $k \in \mathbb{Z}$.

In topology, a {\bf quota or threshold complex} is a simplicial complex on a (potentially) countable infinite set of vertices $v_1, v_2, \dots $, where each vertex is given positive weight $W_1, W_2, \dots$ and a positive quota $q$ is prescribed. The quota complex $X(W_1, W_2, \dots ; q)$ is the simplicial complex whose faces are given by subsets of $\{v_1, v_2, \dots \}$ whose total weight sum 
is below the quota $q$. (Note that in the end, only the vertices whose weight is below $q$ are included in this complex). As discussed in \cite{PW}, the topology of these complexes is tightly connected to the  question of the distribution of the sums of the weights, and open conjectures about this topology/distribution are equivalent to the Goldbach conjecture, the Riemann hypothesis, and other open problems in number theory \cite{PW}.

In this context, our results prove that if one forms the quota complex $X(W_1, W_2, \dots ; q)$ with weights given by the randomly generated weights of our process and integer quota $q$, then 
with probability 1, for any $m \geq 1$ there is a $q_m$ such that for $q \geq q_m$, we have $H_m(X(W_1,W_2, \dots ; q), \mathbb{Z}) \neq 0$, i.e. asymptotically the $m$th integral homology of the quota complex is nonzero. This is essentially a consequence of the $k$-completeness of the weight sequence for all $k \geq 2$, see for example \cite{PW}.

To summarize, our main theorems are the following:

\begin{theorem}
\label{thm: premain} Let $W=\{W_1,W_2,\dots\}$ be a sequence of positive, integer-valued random variables (``weights'') such that the gaps $\{X_{i+1}=W_{i+1}-W_i\}_{i\in\mathbb{N}}$ are independent identically distributed positive, integer-valued random variables with finite $\frac{1}{2}$-moment. Then with probability 1, $W$ is asymptotically complete.
\end{theorem}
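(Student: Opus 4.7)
My plan is to reduce the theorem to a deterministic asymptotic-completeness criterion and verify that its hypotheses hold almost surely. The criterion I would use is: a nondecreasing sequence $\{a_n\}$ of positive integers with $\gcd$ equal to $d$ is asymptotically complete provided there exist $N_0$ and $M$ such that (i) every multiple of $d$ in $[Md,S_{N_0}]$ is a distinct subset sum of $\{a_1, \ldots, a_{N_0}\}$, and (ii) $a_{n+1} \le S_n - (M-1)d$ for all $n \ge N_0$, where $S_n = a_1 + \cdots + a_n$. Induction on $n \ge N_0$, using the recursion $T_{n+1} = T_n \cup (T_n + a_{n+1})$ for subset-sum sets $T_n$ together with (ii), shows every multiple of $d$ in $[Md, S_n]$ lies in $T_n$; since $S_n \to \infty$ this yields asymptotic completeness. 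After rescaling by $d^{-1}$ I may assume $d = 1$.

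Condition (ii) is where the $\frac{1}{2}$-moment hypothesis enters. Since $X_k \ge 1$ gives $W_k \ge k$ and hence $S_{n-1} \ge \binom{n}{2}$, and since $W_{n+1} - S_n = X_{n+1} - S_{n-1}$, condition (ii) with $d=1$ reduces to $X_{n+1} \le S_{n-1} - M + 1$, implied for all large $n$ by the growth bound $X_n = O(n^2)$. For any $c > 0$, by monotonicity of the tail function,
\[
\sum_{n=1}^{\infty} P(X > c n^2) = \sum_{n=1}^{\infty} P\bigl(X^{1/2} > \sqrt{c}\,n\bigr) \le \frac{1}{\sqrt{c}} \int_0^\infty P(X^{1/2} > t)\,dt = \frac{E[X^{1/2}]}{\sqrt{c}} < \infty,
\]
so by the first Borel-Cantelli lemma, almost surely $X_n \le c n^2$ for all sufficiently large $n$. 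Choosing $c < 1/2$ and $N_0$ large enough, (ii) holds.

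Establishing (i) is the main obstacle. Let $A = \mathrm{supp}(X)$; since $\gcd(A) = 1$, pick a finite subset $\{\alpha_1, \ldots, \alpha_r\} \subseteq A$ with gcd $1$. By Sylvester-Frobenius, every sufficiently large integer is a nonnegative integer combination of the $\alpha_i$'s, and by the strong law of large numbers each $\alpha_i$ almost surely occurs as $X_k$ with positive density $P(X = \alpha_i)$. To convert this ``alphabet-level'' Bezout flexibility into a contiguous block of subset sums of $\{W_1, \ldots, W_L\}$, I would use the identity
\[
\sum_{k \in I} W_k = \sum_{j=1}^L c_j X_j, \qquad c_j = \#(I \cap [j,L]),
\]
together with swap moves $I \mapsto (I \setminus \{k\}) \cup \{k \pm 1\}$, which change the subset sum by $\pm X_k$ or $\pm X_{k+1}$. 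Chains of such swaps at positions where $X_k \in \{\alpha_1, \ldots, \alpha_r\}$ realize sum shifts of any sufficiently large integer; starting from an anchor representation (e.g.\ $I_0 = \{1, \ldots, L\}$ with sum $S_L$), this produces every integer in $[M, S_L]$ as a distinct subset sum once $L = N_0$ is taken large.

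The main technical difficulty lies in this combinatorial step: one must verify that the swap moves needed to realize a prescribed Bezout adjustment are simultaneously available, meaning each affected index has a free neighbor slot in the current subset, and that the anchor $S_L$ sits far enough from the boundary of $[M, S_L]$ that all required swap chains stay inside $[1, L]$. Once (i) is in hand, the induction outlined in the first paragraph completes the proof of almost-sure asymptotic completeness.
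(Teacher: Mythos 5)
Your deterministic criterion is correct: conditions (i)--(ii), together with the induction $T_{n+1} = T_n \cup (T_n + a_{n+1})$, do imply asymptotic completeness, and your verification of (ii) is clean and valid. Indeed, you need only the pointwise bound $X_{n+1} = O(n^2)$ (rather than the paper's almost-sure estimate $(X_1 + \cdots + X_n)/n^2 \to 0$, which is proved via Kolmogorov's three-series theorem and Kronecker's lemma), and your tail computation
$\sum_n P(X > c n^2) \le E[X^{1/2}]/\sqrt{c} < \infty$
plus Borel--Cantelli gives this directly. The observation that $W_{n+1} - S_n = X_{n+1} - S_{n-1}$ together with the trivial bound $S_{n-1} \ge \binom{n}{2}$ is a nice way to package the growth condition, and it makes transparent exactly where the $\tfrac12$-moment hypothesis enters (and why it is sharp).

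However, condition (i) --- the base case asking that every multiple of $d$ in an interval $[Md, S_{N_0}]$ be a distinct subset sum of $\{W_1, \ldots, W_{N_0}\}$ --- is not established, and this is where essentially all of the difficulty lives. You acknowledge the gap yourself: the swap-move argument is asserted but not carried out, and the two obstacles you name (simultaneous availability of the required swaps, and keeping the swap chains inside $[1,L]$) are not minor bookkeeping. What you are really trying to re-prove here is a nontrivial structural theorem: that a set $A \subset \mathbb{Z}_+$ with $|A \cap [1,n]| \gtrsim \sqrt{n}$ and $\gcd(A) = 1$ (and hitting every residue class) has finite subset sums covering a full interval. The paper does not attempt a bare-hands argument; it instead invokes Tao--Vu's machinery on subcomplete sequences: Theorem~12.17 of \cite{TV} (a set with counting function $\ge C\sqrt{n}$ has finite distinct sums containing an infinite arithmetic progression) together with Lemma~12.16 of \cite{TV} (subcompleteness plus intersection with every infinite AP gives asymptotic completeness), and then separately uses the weak modular equidistribution theorem to get the AP-intersection property. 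Those two Tao--Vu results carry the combinatorial content that your sketch needs but does not supply. Without either importing a theorem of that strength or carrying out the swap-chain construction in full (including the anchoring and collision-avoidance), your proof of (i) --- and hence of the theorem --- is incomplete.
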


%
\begin{theorem}
\label{thm: mainUnBDD}
Let $W=\{W_1,W_2,\dots\}$ be a sequence of positive, integer-valued random variables (``weights'') such that the gaps $\{W_{i+1}-W_i\}_{i\in\mathbb{N}}$ are independent identically distributed positive, integer-valued random variables. Let $s_1, s_2, \dots$ and $p_1, p_2, \dots$ denote the possible values and probabilities for the gap distribution. Let $\frac{ - \log p_{*}}{s_*}=inf_{i \in \mathbb{N}} \frac{ - \log p_{i}}{s_i} \geq 0$. If the distribution of the gaps has a moment generating function which has a radius of convergence larger than  $\frac{ -2(\log{p_*})}{s_*}$, then for any fixed integer $m \geq 2$, with probability $1$ the sequence of weights is asymptotically $m$-complete. 
\end{theorem}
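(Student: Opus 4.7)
The plan is a second-moment / Borel--Cantelli argument on a counting random variable for $m$-fold representations. By rescaling we may assume $d = \gcd(s_1, s_2, \ldots) = 1$, so the target is every sufficiently large positive integer. Fix $m \geq 2$; for each positive integer $N$ set
\[
Y_N = \#\{(i_1, \ldots, i_m) : i_1 < \cdots < i_m,\ W_{i_1} + \cdots + W_{i_m} = N\}.
\]
Asymptotic $m$-completeness is the statement that $Y_N \geq 1$ for all sufficiently large $N$, so it suffices to prove $\sum_N \mathbb{P}(Y_N = 0) < \infty$ and invoke Borel--Cantelli.

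For the first moment, the independent-increment structure of the renewal process $\{W_n\}$ lets me write $\mathbb{P}(W_{i_1} + \cdots + W_{i_m} = N)$ as a convolution in the gap distribution, and summing over tuples converts $\mathbb{E}[Y_N]$ into a product of renewal sums $U(v) = \sum_k \mathbb{P}(W_k = v)$. The renewal theorem gives $U(v) \to 1/\mathbb{E}[X]$ for aperiodic distributions, producing a lower bound of the form $\mathbb{E}[Y_N] \gtrsim c_m N^{m-1}$. The second moment is handled by partitioning pairs of $m$-tuples by their intersection pattern: (near-)disjoint pairs approximately factor by independence and contribute $(\mathbb{E} Y_N)^2$ up to a lower-order error, while overlapping pairs require uniform upper bounds on the joint probabilities appearing in the weight sequence.

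The MGF hypothesis is what controls the overlapping contributions, and is the heart of the matter. Setting $\rho = -\log(p_*)/s_*$, the trivial bound $\mathbb{P}(W_k = k s_*) \geq p_*^k = e^{-\rho k s_*}$ describes the worst-case concentrated small-weight configurations, while Chernoff's inequality provides the matching upper bound $\mathbb{P}(W_k = v) \leq e^{-tv} M(t)^k$ for any $t$ in the interior of the domain of convergence of $M(t) = \mathbb{E}[e^{tX}]$. The hypothesis $R > 2\rho$ is calibrated precisely so that the \emph{square} of such a Chernoff bound, which is exactly what arises in the second-moment calculation for overlapping index patterns, still dominates the squared small-weight probabilities with room to spare. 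I expect this to yield $\mathrm{Var}(Y_N)$ small enough relative to $(\mathbb{E} Y_N)^2$ that Chebyshev's inequality delivers a summable tail in $N$.

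The principal obstacle is the case $m = 2$: there $\mathbb{E}[Y_N]$ grows only linearly, so the naive Chebyshev estimate $\mathbb{P}(Y_N = 0) \lesssim 1/\mathbb{E}[Y_N] \sim 1/N$ is \emph{not} summable, and one must extract genuinely exponential decay from the MGF assumption in order to close the argument. For $m \geq 3$ the mean already grows as $N^{m-1}$, so the constraint on $R$ should be easier to use, but the combinatorics of indexing the overlap patterns is more intricate; the role of the MGF hypothesis throughout is to ensure that no handful of very small gaps can create enough correlation between overlapping $m$-tuples to inflate the variance beyond what Chebyshev can absorb.
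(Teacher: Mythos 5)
Your plan is a genuinely different route from the paper's, and it has a gap that you have already half-diagnosed but not repaired. You propose counting $Y_N$, the number of ordered $m$-tuples of distinct indices whose weights sum to $N$, and showing $\sum_N \mathbb{P}(Y_N = 0) < \infty$ via a first/second moment comparison. Carried out carefully, the renewal structure gives $\mathbb{E}[Y_N] \asymp N^{m-1}$ and (after controlling overlapping tuples and using the fast mixing of the gap process) $\mathrm{Var}(Y_N) = O(N^{2m-3})$ at best, so Chebyshev delivers $\mathbb{P}(Y_N = 0) \lesssim N^{-(m-1)}$. That is summable only for $m \geq 3$; for $m=2$ you are stuck at $O(1/N)$, which you correctly flag as the ``principal obstacle.'' But the remedy you gesture at --- ``extract genuinely exponential decay from the MGF assumption'' --- is not something the second-moment method can produce. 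Polynomial moments of $Y_N$ give polynomial tail bounds; getting exponential lower-tail bounds for a sum of dependent indicators requires a concentration tool of Janson/Suen type, which needs the constituent events to have a specific monotone or positively correlated structure that the events $\{W_{i_1}+\cdots+W_{i_m}=N\}$ do not obviously possess. So as written the argument does not close in the hardest case, and it is the hardest case that drives the theorem.

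The paper avoids this entirely by changing the counting problem into an \emph{intersection} problem. Writing $n - (W_1+\cdots+W_{m-2})$ as a target and running the weight sequence both forward and (via the vaulting index $T_n$ and the prevault-independence Theorem~\ref{thm:endsIndependent}) backward from the target, one reduces ``$n$ fails to be an $m$-fold distinct sum'' to ``two (approximately independent) renewal processes avoid each other in $[1,n]$.'' The key mechanism in Lemma~\ref{lem:indepcopies} is the construction of $\Theta(n/\log n)$ disjoint ``$s_1$-sieves'' --- long runs of minimal gaps in the first process --- each of which independently gives the second process a fixed positive chance $\bar p$ of hitting. This is what produces a stretched-exponential bound $\mathbb{P}(A_n) \leq 2a\log n\, e^{-c\, n^{1+a\log p_*/s_*}/\log n}$, and the MGF hypothesis enters only to supply $\max(X_1,\ldots,X_n) \leq a\log n$ almost surely (Lemma~\ref{lem:maxgap}) for an $a$ small enough to keep the exponent $1 + a\log p_*/s_*$ positive. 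Your use of the MGF, by contrast, is as a Chernoff ingredient inside the variance estimate, which is a different --- and in this problem insufficient --- role. If you want to salvage your approach, you would need a new idea to convert the variance bound into an exponential one; the paper's sieve construction is one such idea, but it effectively bypasses the moment method altogether.
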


Note that Theorem~\ref{thm: mainUnBDD} is a significant strengthening of Theorem~\ref{thm: premain} as it is much easier for a set of positive integers to be asymptotically complete 
than to be asymptotically $m$-complete. For example, the set of prime numbers is asymptotically complete but only conjecturally asymptotically weakly $\leq 3$-complete.

Theorem~\ref{thm: mainUnBDD} applies to gaps with Poisson distribution and gaps with geometric distribution with $p > \frac{\sqrt{5}-1}{2}$. 

\begin{remark}
The conclusion of the theorem is a statement about $m$-fold sumsets. It is stating that for any integer
$m>1$, with probability $1$ the $m$-fold sumset $$\underbrace{W+\dots+W}_{m \text{ summands}}$$ contains all large enough integer multiples of the gcd of the gap sequence.
\end{remark}

For any $m\in\mathbb{N}$, if the gaps are allowed to be large with high enough probability, then the random gap sequence will not be asymptotically $m$-complete. In Section \ref{sec:counter}, we give a simple example demonstrating this that has finite $\alpha$-moment when $0 < \alpha < \frac{1}{m+1}$ but has infinite $\frac{1}{m+1}$-moment.

\section{Definitions and preliminary results}

\begin{definition} Given positive integers $s_1, \dots , s_k$, we will let $M(s_1,\dots , s_k)$ denote the monoid they generate, i.e.,
the set $$\left\{ \sum_{i=1}^k n_i s_i | n_i \text{ is a nonnegative integer } \right\}$$ under addition. 
\end{definition}

\begin{example}
The subgroup of the integers $(\mathbb{Z},+)$ generated by $5$ and $6$ is all of $\mathbb{Z}$, the set of multiples of $1=gcd(5,6)$, while the monoid 
generated by $5$ and $6$ is $M(5,6)=\{0, 5, 6, 10, 11, 12, 15, 16, 17, 18 \} \cup \{n \in \mathbb{Z} | n \geq 20 \}$.
\end{example}

Note that it is well-known that the group that $s_1, \dots, s_k$ generate is equal to the set of integer multiples of $gcd(s_1, \dots, s_k)$, their greatest common divisor. The elementary ``Stamp lemma'' states that the monoid generated by $s_1, \dots, s_k$, i.e., $\{ \sum_{i=1}^k a_i s_i | a_i \in \mathbb{Z}_{\geq 0} \}$, contains all large enough multiples of $gcd(s_1, \dots, s_k)$. We include a proof of this well-known fact in the appendix for completeness. 

Define the lower weight density of the set $S$ of positive integers to be $$\delta_-(S)=\liminf_{N \to \infty} \frac{|\{ s \in S | s \leq N \}|}{N}$$ and the upper weight density to be 
$$\delta_+(S)=\limsup_{N \to \infty} \frac{|\{ s \in S | s \leq N \}|}{N}.$$ Note that $0 \leq \delta_- \leq \delta_+ \leq 1$ in general. If $\delta_-(S)=\delta_+(S)$, we say the set $S$ has density $\delta(S)=\delta_-(S)=\delta_+(S)$.

If $1 \leq W_1 < W_2 < W_3 < \dots$ is an increasing sequence of positive integer weights, we will set $W=\{ W_i | i \in \mathbb{Z}_+ \}$. 
We let $W-W=\{ W_i - W_j | i, j \in \mathbb{Z}_+\}$ to be the ``difference set" of the weights and $W+W=\{ W_i + W_j | i,j \in \mathbb{Z}_+\}$ to be the ``sum set".
Finally, $W \oplus W=\{ W_i + W_j | i < j \in \mathbb{Z}_+ \}$ will denote the ``distinct weights sum set". We will be generating weights randomly, and so all of these sets and their densities will a priori be random in our process.

The fundamental model that we study in this paper is given as follows:

Let $X_1, X_2, X_3, \dots$ be a sequence of i.i.d. (independent and identically distributed) random variables which take only positive integer values. These random variables represent random gaps in an increasing sequence of weights $W_1 < W_2 < W_3 < \dots$ where $W_j=\sum_{i=1}^j X_i$. Thus, starting at 0, the $X_i$ are the consecutive gaps in the sequence of weights $0 < W_1 < W_2 < W_3 < \dots$.

Let the distribution that the $X_j$ share have the probability of value $i$ be denoted by $p_i$ for all $i \in \mathbb{Z}_+$. Let $X$ denote a prototypical random variable with the distribution given by 
this gap distribution.

We will write $Spt(X)=\{ i \in \mathbb{Z}_+ | p_i > 0 \}$ and say the gap distribution has finite support if $|Spt(X)| < \infty$.

We now prove some preliminary results: 

\begin{theorem}
\label{thm: prelim}
Let $X_1, X_2, X_3, \dots$ be a sequence of i.i.d. random positive, integer-valued ``gap'' variables. Let $W_j= \sum_{i=1}^j X_i$ be the corresponding sequence of weights and $W=\{ W_j | j \in \mathbb{Z}_+ \}$ be the set of all the weights.
Then, with probability 1:
\begin{enumerate}[(i)]
\item We have $$W-W = M(Spt(X)) \cup -M(Spt(X))$$ where $M(Spt(X))$ denotes the monoid generated by the support of $X$. 
\item If $E[X] < \infty$, we have $$\delta_-(W)=\delta_+(W) =\delta(W)=\frac{1}{E[X]}.$$ 
\end{enumerate}
\end{theorem}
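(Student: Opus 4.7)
My plan is to handle the two assertions separately: (i) rests on a second Borel--Cantelli argument applied to length-$m$ blocks of gaps, while (ii) is a standard consequence of the strong law of large numbers.

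For (i), the inclusion $W - W \subseteq M(Spt(X)) \cup -M(Spt(X))$ is essentially deterministic, since for $i < j$ we have $W_j - W_i = X_{i+1} + \cdots + X_j$, a sum of values almost surely drawn from $Spt(X)$, which therefore lies in $M(Spt(X))$; the opposite sign is symmetric, and $i = j$ gives $0$. For the reverse inclusion, I would observe that $M(Spt(X))$ is countable and prove realization pointwise, then take a countable intersection. Fix $s \in M(Spt(X))$ and a decomposition $s = a_1 + \cdots + a_m$ with each $a_\ell \in Spt(X)$. Partition the gap sequence into consecutive disjoint blocks $B_k = (X_{km+1}, \ldots, X_{(k+1)m})$ for $k \geq 0$; these are independent, and each has positive probability $p_{a_1} \cdots p_{a_m}$ of equalling the target tuple $(a_1, \ldots, a_m)$. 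The second Borel--Cantelli lemma then produces infinitely many such $k$ almost surely, and on any one of them $W_{(k+1)m} - W_{km} = s$.

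For (ii), set $\mu = E[X]$ and apply the strong law to the i.i.d.\ gaps to get $W_n/n \to \mu$ almost surely. Writing $n(N) = |\{ j : W_j \leq N \}|$ so that $W_{n(N)} \leq N < W_{n(N)+1}$, division yields the sandwich
\[
\frac{n(N)}{W_{n(N)+1}} \;\leq\; \frac{n(N)}{N} \;\leq\; \frac{n(N)}{W_{n(N)}}.
\]
The right-hand side tends to $1/\mu$ almost surely, and the left-hand side differs from the right by the factor $W_{n(N)}/W_{n(N)+1}$, which converges to $1$ once one knows $X_{n+1}/W_n \to 0$ almost surely. That fact follows from the standard equivalence $E[X] < \infty \iff \sum_n P(X > \epsilon n) < \infty$ for every $\epsilon > 0$ (so $X_n/n \to 0$ by Borel--Cantelli), combined with $W_n \sim n\mu$.

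The most delicate point is the countable bookkeeping in (i): the almost-sure event on which the realization holds must be taken uniformly across $s \in M(Spt(X))$, which is permissible precisely because that monoid is countable and measure is countably subadditive. Beyond this, the argument is a routine synthesis of second Borel--Cantelli and SLLN, and I do not anticipate a substantive obstacle.
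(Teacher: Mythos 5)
Your argument for (i) matches the paper's: both realize a target element of the monoid by batching the i.i.d.\ gaps into disjoint blocks, noting that a fixed gap pattern has positive probability in each block, concluding via a Borel--Cantelli (or equivalent "infinitely many independent trials") argument, and finishing with countable subadditivity over the countable monoid; the trivial forward inclusion is handled the same way. For (ii) the paper simply cites the standard renewal theorem (Durrett, Theorem 2.4.6), whereas you reproduce its textbook proof via the SLLN and the sandwich $W_{n(N)} \leq N < W_{n(N)+1}$; your derivation is correct, and the only minor simplification available is that one can avoid the $X_n/n \to 0$ step by observing directly that $\frac{n(N)+1}{W_{n(N)+1}} \to 1/\mu$ along the subsequence $n(N)+1 \to \infty$.
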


\begin{proof}
Proof of (i): Let $x \in M(Spt(X))$. Thus, $x=\sum_{i=1}^n a_i s_i$, where $a_i$ are nonnegative integers and $p_{s_i} > 0$ for all $1 \leq i \leq n$. 
Note that we can think of the $X_i$ as the results of a sequence of independent trials. Consider the event that a sequence of trials results in outcome $s_1$ for the first $a_1$ trials, outcome $s_2$ for the next $a_2$ trials, followed by outcome $s_3$ for the next $a_3$ trials, etc., until the last $a_k$ trials with outcome $s_k$. The probability that a sequence of $a_1 + \dots + a_k$ consecutive trials results in this sequence of outcomes is $p_{s_1}^{a_1} \dots p_{s_k}^{a_k} > 0$. Thus, batching the independent trials into disjoint batches of length $a_1 + \dots + a_k$, we see that the probability that we never get a sequence of $a_1 + \dots + a_k$ consecutive trials with this pattern of results is zero. Thus, with probability 1, there exists $W_n$ such that $W_{n+a_1+\dots + a_k}=
W_n+(a_1s_1 + \dots +a_ks_k)=W_n+x$. Thus, $W-W$ contains both $x$ and $-x$. Since there are only a countable number of possible $x \in M(Spt(X))$, we may conclude that with probability 1, 
$W-W$ contains all of $M(Spt(X)) \cup -M(Spt(X))$.

Thus, $M(s_1, \dots, s_k) \cup -M(s_1, \dots, s_k) \subseteq W-W$. The converse set inclusion is trivial, as any $x=W_i-W_j\in W-W$ with $i \neq j$ is a finite integer linear combination of a subset of the numbers in $Spt(X)$ with all positive or all negative coefficients. The sign depends on whether $i<j$ or $i>j$.

Proof of (ii): This is a standard result in Renewal theory. See e.g. Theorem 2.4.6 in \cite{Durrett}.

\end{proof}

Note that the randomly generated set of weights $W$ is with probability 1 not the same in any two instances of the process (as long as $|Spt(X)| \geq 2$), but the set $W-W$ is with probability $1$ predetermined by the possible gap values, and hence not really random.

\section{Weak Modular Equidistributivity}

Let $X$ be a gap distribution from which we can generate a random sequence of integer weights $1 \leq W_1 < W_2 < W_3 < \dots$ as previously discussed. Further assume $gcd(Spt(X))=1$, and let 
$p_j=P(X=j)$. 

Given an integer modulus $M > 1$, this sequence of weights induces a Markov process in the cyclic group $\mathbb{Z}/M\mathbb{Z}$, which starts at $0$ and at each step 
adds value $x \in \mathbb{Z}/M\mathbb{Z}$ to the current value with probability $$\sum_{\{j | j = x \text{ mod } M \}} p_j.$$ Let $\mathbb{A}$ be the corresponding Markov matrix whose rows and columns are labelled by the elements of $\mathbb{Z}/M\mathbb{Z}$ in their standard order $0, 1, 2, \dots, M-1$ and whose $(i,j)$-entry is $$p_{ij}=\sum_{\{t | j=i+t \text{ mod } M\}} p_t.$$

Basically, this Markov chain keeps track of the mod $M$ congruence class of the weights we generated in the original process. 
The $(i,j)$-entry of $\mathbb{A}^N$ gives the conditional probability that $W_{k+N} = j \text{ mod } M$ given that $W_k=i \text{ mod } M$, for any positive integers $k, N, i, j$.

By the Stamp Lemma~\ref{lem: stamp lemma}, since $gcd(Spt(X))=1$,  $M(Spt(X))$, the monoid generated by the possible gap values,  contains all large enough integers. Thus, it follows that this Markov chain is regular (some power of $\mathbb{A}$ has all nonzero entries), and hence by the Frobenius-Perron theorem, 
$\mathbb{A}$ has a unique probability row eigenvector for eigenvalue $1$. If $\tau=(\tau_0, \dots, \tau_{m-1})$ denotes this eigenvector, then the theory of Markov chains tells us 
as $N \to \infty$, the rows of $\mathbb{A}^N$ all converge to $\tau$.
This means that $$\lim_{N \to \infty} P(W_{k+N}=j \text{ mod } M | W_k = i \text{ mod } M ) = \tau_j$$ for any positive integers $k, M, i, j$.

Furthermore, in our case, this Markov chain is based on a Cayley graph on $\mathbb{Z}/M\mathbb{Z}$ with probabilities given by $p_{ij}$ that are translation invariant i.e. $p_{i,j}=p_{i+T,j+T}$.
Due to this, it is easy to check that the rows of the matrix are related by the fact that each row is the cyclic shift of the row above it, one step to the right. In other words, $\mathbb{A}$ is a 
circulant matrix. Due to this, each entry of the first row occurs exactly once in each column also, and so all column and row sums equal to one (the matrix is doubly stochastic). 
This implies that $\tau=(\frac{1}{M}, \dots, \frac{1}{M})$.

Thus, our set of weights will be weakly equidistributed over any modulus in this case, i.e., 
$$\lim_{N \to \infty} P(W_{k+N}=j \text{ mod } M | W_k = i \text{ mod } M ) = \frac{1}{M}$$ for any positive integers $k, M$.

Intuitively, this means that in any modulus $M$, independent of what the current weight $W_i$ is in $\mathbb{Z}/M\mathbb{Z}$, a much later weight $W_{i+N}, N >> 0$, is equally likely to lie in 
any of the congruence classes modulo $M$.

\begin{example}
Let  $X$ take value $2$ with probability $p$ and $3$ with probability $1-p$, $0 < p < 1$, and set modulus $M=5$. The transition matrix for the induced regular Markov chain on $\mathbb{Z}/5\mathbb{Z}$ would be 
the circulant matrix 
$$
\mathbb{A}=\begin{bmatrix} 0 & 0 & p & 1-p & 0 \\ 0 & 0 & 0 & p & 1-p \\ 1-p & 0 & 0 & 0 & p \\ p & 1-p & 0 & 0 & 0 \\ 0 & p & 1-p & 0 & 0 \end{bmatrix}
$$
and the unique probability row vector corresponding to eigenvalue $1$ is $[\frac{1}{5}, \frac{1}{5}, \frac{1}{5}, \frac{1}{5}, \frac{1}{5}]$, 
implying the weak equidistributivity modulo $5$ of the corresponding weight sequences generated by this random process.
\end{example}

We summarize the discussion of this section in the following result:

\begin{theorem}[Weak Modular Equidistribution Theorem]
\label{thm: weakmodularequidistribution}

Let $\{X_i\}_i$ be a sequence of i.i.d. positive, integer-valued ``gap'' random variables, and let $W_1 < W_2 < W_3 < \dots $ be the sequence of weights these gaps generate, where $W_j = \sum_{i=1}^j X_i$. If $X$ refers to a prototypical gap random variable, further assume $gcd(Spt(X))=1$. 
Then with probability $1$, we have 
$$
\lim_{N \to \infty} P(W_{k+N}=j \text{ mod } M | W_k = i \text{ mod } M ) = \frac{1}{M}
$$
for all positive integers $k, i, j$, and $M$.
\end{theorem}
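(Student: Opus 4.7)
The plan is to reduce the statement to the classical convergence theorem for regular finite-state Markov chains, following the outline already sketched in the preceding discussion. First, because the gaps $X_i$ are i.i.d., the reduced process $\overline{W}_k := W_k \bmod M$ is a time-homogeneous Markov chain on $\mathbb{Z}/M\mathbb{Z}$ with transition matrix $\mathbb{A}$ as described in the excerpt, and by time-homogeneity $(\mathbb{A}^N)_{ij} = P(W_{k+N} \equiv j \mid W_k \equiv i \pmod M)$ for every $k$. So it suffices to prove $(\mathbb{A}^N)_{ij} \to 1/M$ entrywise. The structural observation that drives the whole argument is that $p_{ij}$ depends only on $j-i$ modulo $M$, which makes $\mathbb{A}$ circulant; each column is a cyclic shift of the first row, so both row and column sums equal $1$, i.e., $\mathbb{A}$ is doubly stochastic.

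Next, I would verify that $\mathbb{A}$ is regular, meaning that some power $\mathbb{A}^{n_0}$ has all strictly positive entries. Under the hypothesis $\gcd(\mathrm{Spt}(X)) = 1$, the Stamp Lemma guarantees that $M(\mathrm{Spt}(X))$ contains every sufficiently large positive integer; in particular, for every residue $r \in \mathbb{Z}/M\mathbb{Z}$ there are representations of integers $\equiv r \pmod M$ as sums of elements of $\mathrm{Spt}(X)$. To synchronize the number of summands across residues, fix some $s_* \in \mathrm{Spt}(X)$: appending a single $s_*$ shifts the residue by $s_* \bmod M$ and the summand count by $1$, while appending $M$ copies of $s_*$ leaves the residue unchanged and raises the count by $M$. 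A short bookkeeping argument with these two moves produces a single $n_0$ for which every residue is realized as a sum of exactly $n_0$ elements of $\mathrm{Spt}(X)$, giving $(\mathbb{A}^{n_0})_{ij} > 0$ for all $i, j$.

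With regularity in hand, the standard convergence theorem for finite-state Markov chains (equivalently Frobenius–Perron) yields a unique stationary row vector $\tau$ with $\mathbb{A}^N \to \mathbf{1}\tau$ entrywise. Because $\mathbb{A}$ is doubly stochastic, the uniform vector $(1/M, \ldots, 1/M)$ is stationary, and uniqueness forces $\tau$ to equal it. Substituting back gives $(\mathbb{A}^N)_{ij} \to 1/M$, which is the required limit. The conditional probabilities in the statement are deterministic functions of the gap distribution and of $k, i, j, M$ alone (they do not depend on any particular realization of the weights), so the ``with probability $1$'' qualifier is automatic.

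The one step demanding genuine care rather than pure bookkeeping is the regularity verification, because I need a single $n_0$ witnessing positivity simultaneously for every pair $(i,j)$, rather than just irreducibility plus aperiodicity separately. The padding trick above, combined with irreducibility (itself immediate from $\gcd(\mathrm{Spt}(X)) = 1$ together with the Stamp Lemma), is what makes that synchronization work; once regularity is established, the circulant/doubly-stochastic observation closes the argument cleanly.
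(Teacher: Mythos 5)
Your proposal follows the paper's route step for step: reduce to a time-homogeneous Markov chain on $\mathbb{Z}/M\mathbb{Z}$, invoke the Stamp Lemma to claim regularity, apply Frobenius--Perron for the unique stationary vector, and use the circulant / doubly-stochastic structure to identify it as uniform. You also correctly observe (a small clarification the paper leaves implicit) that the conditional probabilities are deterministic functions of the gap law, so the ``with probability $1$'' is vacuous. However, the regularity step --- the one you yourself flag as demanding ``genuine care'' --- is exactly where both your argument and the paper's break down, and the ``short bookkeeping argument'' you invoke cannot exist. Take $Spt(X)=\{2,5\}$, so $\gcd(Spt(X))=1$ and the Stamp Lemma does give that $M(Spt(X))$ contains every integer $\geq 4$, and take $M=3$. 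Since $2\equiv 5 \pmod 3$, every sum of exactly $N$ elements of $Spt(X)$ is $\equiv 2N \pmod 3$; for each $N$ only a single residue is reachable, so no $n_0$ can realize all residues with exactly $n_0$ summands, and $\mathbb{A}$ is the deterministic $3$-cycle permutation matrix: irreducible, but of period $3$, hence not regular. The stated limit does not exist for this $M$, so the theorem is already false with the hypothesis $\gcd(Spt(X))=1$ alone.

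The underlying point is that $\gcd(Spt(X))=1$ does not rule out $Spt(X)$ lying inside a single nonzero residue class modulo some $M$, in which case the induced walk on $\mathbb{Z}/M\mathbb{Z}$ is periodic. Your two padding moves --- appending one $s_*$, or $M$ copies of $s_*$ --- only move you within the sub-semigroup of $\mathbb{Z}/M\mathbb{Z}\times\mathbb{Z}_{\geq 0}$ generated by $(s_*\bmod M,\,1)$ and $(0,\,M)$, which is too small to reach every residue at a common summand count when $\gcd(s_*,M)>1$, as in the example above. What the regularity argument actually needs is the aperiodicity condition $\gcd\{\,s-s' : s,s'\in Spt(X)\,\}=1$ (equivalently, $Spt(X)$ is not contained in any arithmetic progression of common difference $\geq 2$); under that strengthened hypothesis the synchronization you sketched can be carried out for every $M$. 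So your proof is a faithful rendering of the paper's argument, but the gap you inherit from the paper is a real one: the regularity claim does not follow from $\gcd(Spt(X))=1$, and fixing it requires strengthening the hypothesis, not merely sharpening the bookkeeping.
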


\section{Random weight sequences with finite 1/2-moment are always asymptotically complete}

In this section, we will prove that a random positive integer weight sequence is almost surely asymptotically complete as long as the underlying gap distribution has a finite $\frac{1}{2}$-moment.

First, a lemma:
\begin{lemma}
\label{lemma: squarerootlaw}
Let $X_1,X_2,\dots$ be a sequence of i.i.d. positive, integer-valued random variables whose common distribution has a finite $1/2$'th moment. I.e., $E\left(X_1^{\frac 12}\right)<\infty$. Then $$\lim_{n\rightarrow\infty}\frac{X_1+\dots+X_n}{n^2}=0,\text{ a.s.}$$
\end{lemma}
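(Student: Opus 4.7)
The plan is a standard truncation argument followed by Kolmogorov's convergence theorem and Kronecker's lemma, adapted to the scaling $b_n = n^2$. Write $Y_n := X_n \mathbf{1}_{\{X_n \le n^2\}}$. The first step is to reduce to the $Y_n$: since $X_n$ is integer-valued, $E[X_1^{1/2}] = \sum_{k \ge 1} P(X_1 \ge k^2)$, so $E[X_1^{1/2}] < \infty$ gives $\sum_n P(X_n > n^2) < \infty$, and Borel--Cantelli implies that almost surely $X_n = Y_n$ for all sufficiently large $n$. In particular the partial sums $S_n := X_1 + \cdots + X_n$ and $S_n' := Y_1 + \cdots + Y_n$ differ by an (eventually constant) random amount, so it suffices to prove $S_n'/n^2 \to 0$ a.s.

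Next I would centre and apply Kolmogorov's convergence theorem to the independent, mean-zero summands $(Y_n - E[Y_n])/n^2$. The required variance estimate is
\[
\sum_{n=1}^{\infty} \frac{\mathrm{Var}(Y_n)}{n^4} \le \sum_{n=1}^{\infty} \frac{E[X_1^2 \mathbf{1}_{\{X_1 \le n^2\}}]}{n^4} = E\!\left[ X_1^2 \sum_{n \ge \lceil X_1^{1/2} \rceil} n^{-4} \right] \le C \cdot E[X_1^{1/2}] < \infty,
\]
where I used Fubini (on nonnegative terms) together with the elementary bound $\sum_{n \ge m} n^{-4} = O(m^{-3})$. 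Kolmogorov's theorem then gives the almost sure convergence of $\sum_n (Y_n - E[Y_n])/n^2$, and Kronecker's lemma (with $b_n = n^2$) yields
\[
\frac{1}{n^2} \sum_{k=1}^n \bigl(Y_k - E[Y_k]\bigr) \longrightarrow 0 \quad \text{a.s.}
\]

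The remaining and, I expect, slightly more delicate piece is to control the deterministic mean term, showing $n^{-2}\sum_{k=1}^n E[Y_k] \to 0$. The subtlety is that we allow $E[X_1] = \infty$, so a direct appeal to the strong law is unavailable. The key inequality is $X_1 \mathbf{1}_{\{X_1 \le n^2\}} \le n \cdot X_1^{1/2}$. Combined with the observation that dominated convergence, applied to $X_1^{1/2}$, produces for every $\varepsilon > 0$ a threshold $K_\varepsilon$ with $E[X_1^{1/2} \mathbf{1}_{\{X_1 > K_\varepsilon\}}] < \varepsilon$, we split
\[
E[Y_n] \le E[X_1 \mathbf{1}_{\{X_1 \le K_\varepsilon\}}] + n \cdot E[X_1^{1/2} \mathbf{1}_{\{K_\varepsilon < X_1 \le n^2\}}] \le K_\varepsilon + n\varepsilon,
\]
so $E[Y_n]/n \to 0$. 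A weighted Ces\`aro step (the weights $k/n^2$ sum to $O(1)$) then upgrades this to $n^{-2}\sum_{k=1}^n E[Y_k] \to 0$, and combining with the previous display gives $S_n'/n^2 \to 0$ a.s., hence $S_n/n^2 \to 0$ a.s. by the reduction in the first paragraph.
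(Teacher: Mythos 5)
Your proof is correct, and it reaches the conclusion by a genuinely different route from the paper's. The paper applies Kolmogorov's three-series theorem directly to $Y_i := X_i/i^2$ with cutoff $1$, obtaining convergence of $\sum X_i/i^2$ and then a single appeal to Kronecker. The key observation there is that one computation, namely
\[
\sum_{i} E\!\left[\tfrac{X_i^k}{i^{2k}}\mathbf{1}_{\{X_i\le i^2\}}\right] \;\le\; c\, E\!\left[X_1^{1/2}\right] \;<\;\infty ,
\]
works uniformly in $k$, so $k=1$ settles the mean series and $k=2$ the variance series at once. You instead unpack the three-series theorem by hand: Borel--Cantelli to reduce to the truncated $Y_n=X_n\mathbf{1}_{\{X_n\le n^2\}}$, the two-series (variance) form of Kolmogorov's theorem plus Kronecker for the centered fluctuations $\sum(Y_n-E[Y_n])/n^2$, and then a separate control of $n^{-2}\sum_{k\le n}E[Y_k]$. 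Your variance estimate coincides with the paper's $k=2$ computation, but your treatment of the mean is different: rather than proving the stronger $\sum_n E[Y_n]/n^2<\infty$ (which the paper's $k=1$ bound gives, and which would again yield the Ces\`aro limit by Kronecker), you establish only the weaker statement $E[Y_n]/n\to 0$ via the pointwise bound $X\mathbf{1}_{\{X\le n^2\}}\le n X^{1/2}$ together with an $\varepsilon$-splitting and a Toeplitz/Ces\`aro averaging step. Both are valid; the paper's is more economical because the same series bound discharges all three conditions, while yours makes the mechanism of the three-series argument explicit. One small imprecision worth tidying: for integer-valued $X_1$ the identity $E[X_1^{1/2}]=\sum_{k\ge 1}P(X_1\ge k^2)$ should be an inequality, $\sum_{k\ge 1}P(X_1\ge k^2)=\sum_j\lfloor\sqrt j\rfloor\,p_j\le E[X_1^{1/2}]$, which is what your Borel--Cantelli step actually uses.
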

\begin{proof}
For any $i\in\mathbb{N}$, let $p_i=P(X_1=i)$, $Y_i=X_i/i^2$ and $Z_i=Y_i1_{Y_i\leq 1}$. We will apply Kolmogorov's three-series theorem to the sequence $Y_i$. First, let's show that for any $k\in\mathbb{N}$ we have $\sum_{i=1}^\infty E(Z_i^k)<\infty$. We have
\begin{align*}
\sum_{i=1}^\infty E(Z_i^k)
= &\sum_{i=1}^\infty E\left(\frac{X_i^k}{i^{2k}}1_{X_i\leq i^2}\right)
=\sum_{i=1}^\infty\sum_{j=1}^{i^2}\frac{j^k}{i^{2k}}p_j
=\sum_{j=1}^\infty\left(j^kp_j\sum_{i=\left\lceil \sqrt{j}\right\rceil}^{\infty}\frac{1}{i^{2k}}\right)
\\<&c\sum_{j=1}^\infty\left(j^kp_j\frac{1}{\left( \sqrt{j}\right)^{2k-1}}\right)
=c\sum_{j=1}^\infty\sqrt{j}p_j=cE\left(X_1^{\frac 12}\right)<\infty,
\end{align*}
where $c$ is some constant. By setting $k=1$ and $k=2$, we obtain $\sum_{i=1}^\infty E(Z_i)<\infty$ and for the variance $\sum_{i=1}^\infty V(Z_i)\leq\sum_{i=1}^\infty E(Z_i^2)<\infty$.

Similarly, we have
\begin{align*}
\sum_{i=1}^\infty P(Y_i>1)
= &\sum_{i=1}^\infty P(X_i>i^2)
=\sum_{i=1}^\infty\sum_{j=i^2+1}^{\infty}p_j
=\sum_{j=1}^\infty\sum_{i\leq\sqrt{j-1}}p_j
\\\leq&\sum_{j=1}^\infty \sqrt{j}p_j=E\left(X_1^{\frac 12}\right)<\infty.
\end{align*}
Thus, by Kolmogorov's three-series theorem, we have that $\sum_{i=1}^\infty X_i/i^2=\sum_{i=1}^\infty Y_i$ converges almost surely. It follows from Kronecker's lemma that $$\lim_{n\rightarrow\infty}\frac{X_1+\dots+X_n}{n^2}=0,\text{ a.s.}$$

\end{proof}

We now prove the main theorem of this section.

\begin{theorem} Let $\{X_i\}$ be an i.i.d. positive integer gap sequence with finite $\frac{1}{2}$-moment. Then with probability 1, the resulting weight sequence $W_n=X_1 + \dots + X_n$ is an asymptotically complete sequence. 

\end{theorem}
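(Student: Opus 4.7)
My approach is a Brown--Honsberger-style iterative extension argument. After reducing to $d = \gcd(\mathrm{Spt}(X)) = 1$ by rescaling (which preserves the finite $\frac{1}{2}$-moment hypothesis), I would build an initial interval in the subset sums $S_{n_0}$ of $\{W_1,\ldots,W_{n_0}\}$ that is long enough to be extended by every subsequent $W_{n+1}$, so that the extended intervals cover $[A,\infty)$ for some $A$.

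The growth input comes from Lemma~\ref{lemma: squarerootlaw}. Since each $X_i \geq 1$ we have $W_k \geq k$, hence $\sum_{k=1}^n W_k \geq \binom{n+1}{2}$, while Lemma~\ref{lemma: squarerootlaw} shows $W_{n+1}/n^2 \to 0$ almost surely. Consequently the ``deficit''
$$M_{n_0} := \sup_{n \geq n_0}\left(W_{n+1} - 1 - \sum_{k=n_0+1}^n W_k\right)$$
is almost surely finite for every $n_0$, because the growing $\Theta(n^2)$ sum eventually dominates the $o(n^2)$ term $W_{n+1}$. A straightforward induction then gives: if $S_{n_0}$ contains an interval $[A, B]$ with $B - A \geq M_{n_0}$, then $S_n \supseteq [A,\, B + \sum_{k=n_0+1}^n W_k]$ for every $n \geq n_0$; the step uses $S_{n+1} \supseteq S_n \cup (S_n + W_{n+1})$ and the inequality $W_{n+1} \leq B - A + 1 + \sum_{k=n_0+1}^n W_k$ inherited from $B - A \geq M_{n_0}$. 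Letting $n \to \infty$ yields asymptotic completeness.

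The main technical obstacle is producing, almost surely, an $n_0$ and a base interval $[A, B] \subseteq S_{n_0}$ of length exceeding $M_{n_0}$. My plan is to pick $s_1, \ldots, s_k \in \mathrm{Spt}(X)$ with $\gcd(s_1,\ldots,s_k) = 1$; by the Stamp Lemma~\ref{lem: stamp lemma}, the monoid $M(s_1,\ldots,s_k)$ contains $[M_0, \infty)$. By the second Borel--Cantelli lemma applied to disjoint blocks of i.i.d.\ gaps, any prescribed finite pattern in $\{s_1,\ldots,s_k\}^R$ almost surely occurs infinitely often as consecutive values of $X_i$. I would design such a pattern so that the induced block of weights, combined with the random prior weight $W_m$, contributes a sufficiently long interval to $S_{n_0}$. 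The combinatorial subtlety here is that subset sums of size $r$ drawn from the block $\{W_{m+1},\ldots,W_{m+R}\}$ carry a shift of $r W_m$, so merging the different size-$r$ contributions into a single long interval requires coordination across subset sizes; one can exploit stamp-lemma representations of integers in $[M_0, M_0 + L]$ as non-negative combinations of the $s_i$'s to arrange that each target integer in the desired range is realized as some subset sum. Since $M_{n_0}$ has a fixed distribution depending only on the gap law and is dominated by $W_{n_0+1}$ plus lower-order terms, choosing the pattern length $L$ and the waiting time large enough almost surely produces a base interval exceeding $M_{n_0}$, which is the heart of the argument.
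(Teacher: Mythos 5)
Your iterative-extension framework is correct and is a genuinely different route from the paper's. Once you have a deterministic $n_0$ and an interval $[A,B]\subseteq S_{n_0}$ of length at least $M_{n_0}$, the induction you describe does push that interval out to $[A,\infty)$, since $B-A\geq M_{n_0}\geq W_{n+1}-1-\sum_{k=n_0+1}^n W_k$ is exactly what is needed to merge $[A,B_n]$ with $[A+W_{n+1},B_n+W_{n+1}]$ at each step, and $M_{n_0}<\infty$ a.s.\ follows from Lemma~\ref{lemma: squarerootlaw} together with $\sum_{k\leq n}W_k\geq\binom{n+1}{2}$, as you say. This is the ``patching'' half of the argument. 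The paper instead invokes Lemma~12.16 of Tao--Vu, which packages the same patching logic, reducing completeness to (a) subcompleteness and (b) the subset sums meeting every infinite arithmetic progression. It then handles (a) by quoting Tao--Vu Theorem~12.17 (density $\geq C\sqrt{n}$ implies subcomplete), verified via Lemma~\ref{lemma: squarerootlaw}, and (b) by the Weak Modular Equidistribution Theorem~\ref{thm: weakmodularequidistribution}. Your route would re-derive (a) and (b) directly, which is a legitimate alternative in principle and would make the proof self-contained rather than reliant on the deep black box of Theorem~12.17.

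The gap is exactly where you locate it, but it is more serious than ``a subtlety to be coordinated.'' The step of producing, a.s., some $n_0$ and an interval in $S_{n_0}$ of length $\geq M_{n_0}$ is essentially equivalent to the hard content of Tao--Vu Theorem~12.17, and your sketched pattern-based construction does not close it. Two concrete problems. First, $M_{n_0}$ does \emph{not} have a fixed distribution: $M_{n_0}\geq W_{n_0+1}-1\geq n_0$, so the threshold you need to beat grows at least linearly in $n_0$ (and is of order $W_{n_0+1}$). Second, the pattern-waiting estimate runs the wrong way. Suppose you wait for a run of $R$ consecutive gaps equal to some fixed $s\in\mathrm{Spt}(X)$ starting at index $m$. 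Dividing the gaps into disjoint blocks of length $R$, the first such run appears (by Borel--Cantelli in the other direction) at index $m$ of order $R\,p_s^{-R}$, which is exponential in $R$; hence $W_m\geq m$ is also exponential in $R$. But the $r$-element subset sums of the run $\{W_{m+1},\dots,W_{m+R}\}$ lie in $rW_m + s\cdot\{$integer interval of length $\approx r(R-r)\}$, so the longest interval (or even AP) you can hope to merge out of the run has length $O(R^2)$, while the threshold $M_{m+R}\geq W_{m+R+1}-1\geq W_m$ is exponential in $R$. The interval you can manufacture is always polynomially small in $R$ while the target is exponentially large in $R$, so no choice of $R$ and waiting time makes the construction close. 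Including subsets of $\{W_1,\dots,W_m\}$ does not obviously rescue this: showing those subset sums already contain a long interval is precisely the statement being proved, so the argument is circular as sketched. To make your route work you would need a genuinely different mechanism for producing the base interval — in effect, a self-contained proof of a subcompleteness criterion of the $\sqrt{n}$-density type — which is what the paper outsources to Tao--Vu.
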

\begin{proof}
First of all, we may assume without loss of generality (by dividing out by the gcd if necessary) that $gcd(s_1, s_2, \dots)=1$, where the $s_j$ are the possible values of the distribution $X_1$.

Under this condition, our definition of asymptotically complete agrees with that in the book \cite{TV}. By lemma 12.16 in \cite{TV}, to show that the sequence $\{W_n\}$ is almost surely asymptotically complete, it is enough to show that almost surely, 
\begin{enumerate}
\item the finite distinct sums of weights intersect every infinite arithmetic progression and
\item the sequence $\{ W_n \}$ is subcomplete as defined in \cite{TV}, i.e., if the finite distinct sums from $\{ W_n \}$ contain an infinite arithmetic progression.
\end{enumerate}

By Theorem 12.17 in \cite{TV}, there exists a positive absolute constant $C >0$ such that an infinite set of positive integers $A$ is subcomplete if $|A \cap [1,n]| \geq C \sqrt{n}$ for all large enough $n$.
(This theorem is sharp in the sense that there exist examples of sets $A \subseteq \mathbb{Z}_+$ with $|A \cap [1,n]| = \Omega(\sqrt{n})$ which fail to be subcomplete.) 

By Lemma~\ref{lemma: squarerootlaw}, we have almost surely that $$\lim_{n\rightarrow\infty}\frac{X_1+\dots+X_n}{n^2}=0.$$

Setting $n=\lceil C\sqrt{m} \rceil$ it follows that for $m$ large enough, almost surely, $$X_1+ \dots + X_{\lceil C\sqrt{m} \rceil} \leq m$$ and so 
$|\{ W_n \} \cap [1,m]| \geq C \sqrt{m}$ for large enough $m$. Thus, almost surely by Theorem 12.17 in \cite{TV}, the weight sequence is subcomplete. 

To finish showing that the weight sequence is complete, it remains to show that with probability 1, it intersects every infinite arithmetic progression, as this implies in particular that its finite sums intersect the said progression. As there is a countable number of possible infinite arithmetic progressions, we may fix one, say $\{a, a+d, a+2d, \dots \}$ with $d \geq 1$, and just show that almost surely the weight sequence intersects it. Such an arithmetic progression consists of all integers equal to $a$ modulo $d$ after a given point and hence the weight sequence intersects it with probability 1, by 
the weak modular equidistribution theorem~\ref{thm: weakmodularequidistribution}. (One may take a large enough $N$ so that the transition probability to an element equal to $a$ modulo $d$ is at least  
$\frac{1}{2d}$, and then note by batching the sequence of weights after $a$ into independent batches of index length $N$, that there are infinitely many independent trials to achieve this transition and hence $0$ probability they all fail.)

\end{proof}

As we have seen in the introduction, asymptotic completeness is much weaker in general than asymptotic $2$-completeness. In later sections, we will achieve the much stronger result on asymptotic $2$-completeness under more stringent conditions on the gap distribution.

\section{Independence of intervals of gaps right before a stopping time}

Let $X_1, X_2, \dots$ be a sequence of i.i.d. positive, integer-valued random variables. The sample space for the set of possible sequences generated is the countable Cartesian product $Z=\mathbb{Z}_+ \times \mathbb{Z}_+ \times \cdots$. We equip it with the product topology coming from the discrete topology on the factors and the corresponding $\sigma$-algebra of Borel sets.
The Kolmogorov extension theorem guarantees the existance of a unique product Borel probability measure such that 
$$P(\{ \alpha_1\} \times \dots \times \{ \alpha_n \} \times \mathbb{Z}_+ \times \dots) = P(X_1=\alpha_1) \dots P(X_n=\alpha_n)$$ for all such product topology basis sets.

Note that a permutation of coordinates that fixes all but a finite set of coordinates gives a measure-preserving homeomorphism of the product space, as the coordinate measures are all equal as the random variables are identically distributed. More precisely, given a permutation $\sigma:=(\sigma_1,\dots,\sigma_m)$ of the integers $1,2,\dots,m$, let $S_{\sigma}$ be the map $S_{\sigma}:Z\rightarrow Z$ defined by $$S_{\sigma}(X_1,X_2,\dots)=(X_{\sigma_1},X_{\sigma_2},\dots,X_{\sigma_m},X_{m+1},X_{m+2},\dots).$$ For any finite permutation $\sigma$, we have that $S_{\sigma}$ is a measure preserving transformation of $Z$.

Now for any positive integer $n$, we can define the vaulting index $T_n: Z \to \mathbb{Z}_{\geq 0}$ by $T_n(\alpha_1, \dots ) = i-1$ where $i \in \mathbb{Z}_+$ is the unique index such that $\alpha_1 + \dots + \alpha_{i-1}< n$ and $\alpha_1 + \dots + \alpha_i \geq n$. It is easy to see that $T_n$ is Borel measurable and so is a well-defined random variable on our sample space.
 
Note that $T_n \leq n$ since the minimum possible gap value is $1$. Let us use the shorthand $X_{[a,b]} = \vec{v}$ to stand for the system of equations $$X_a=v(1), X_{a+1}=v(2), \dots, X_b=v(b-a+1)$$ for any $b-a+1$ dimensional vector $\vec{v}$ and positive integers $a, b$.

\begin{theorem}[Independence of prevault gap intervals]
\label{thm:endsIndependent}
Suppose there is a function $f(n)$ such that with probability 1, $f(n) \leq T_n$ for large enough $n$ with the property that $\lim_{n \to \infty} f(n) = \infty$.
If $a, b \in \mathbb{Z}_+$ with $a+b < f(n)$, then:
\begin{enumerate}
\item $P(X_{[T_n-(b-1),T_n]}=\vec{v}) = P(X_{[1,b]}=\vec{v})$ for all $b$-dimensional $\vec{v}$,

\item The random variables $X_1, \dots, X_a, X_{T_n},\dots, X_{T_n-(b-1)}$ are independent.
\end{enumerate}
\end{theorem}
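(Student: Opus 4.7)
The approach is to exploit the following symmetry: although $T_n$ depends nontrivially on the entire gap sequence, the event $\{T_n = k\}$ depends on $(X_1, \ldots, X_k)$ only through their sum, since $\{T_n = k\} = \{W_k < n\} \cap \{W_k + X_{k+1} \geq n\}$. In particular, for any permutation $\sigma$ of $\{1, \ldots, k\}$, the map $S_\sigma$ from the preamble is measure-preserving and leaves the event $\{T_n = k\}$ invariant. This gives the freedom to relabel the first $k$ gaps arbitrarily within that event.

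To prove (1), I would decompose
$$P\bigl(X_{[T_n - b + 1, T_n]} = \vec{v}\bigr) = \sum_{k \geq b} P\bigl(X_{[k - b + 1, k]} = \vec{v},\ T_n = k\bigr).$$
For each $k \geq b$, pick $\sigma$ a permutation of $\{1, \ldots, k\}$ that carries $(1, \ldots, b)$ to $(k - b + 1, \ldots, k)$. Since $S_\sigma$ is measure-preserving and $\{T_n = k\}$ is $\sigma$-invariant, each summand equals $P\bigl(X_{[1, b]} = \vec{v},\ T_n = k\bigr)$. Summing collapses the right-hand side to $P\bigl(X_{[1, b]} = \vec{v},\ T_n \geq b\bigr)$, and using that $T_n \geq f(n) > b$ almost surely in the relevant regime removes the residual condition, giving $P\bigl(X_{[1, b]} = \vec{v}\bigr)$.

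For (2), I would run essentially the same argument on the joint probability $P\bigl(X_{[1, a]} = \vec{u},\ X_{[T_n - b + 1, T_n]} = \vec{v}\bigr)$, now summing over $k \geq a + b$ so that the two blocks $[1, a]$ and $[k - b + 1, k]$ are disjoint. I would use a permutation of $\{1, \ldots, k\}$ that fixes $\{1, \ldots, a\}$ and sends $\{k - b + 1, \ldots, k\}$ to $\{a + 1, \ldots, a + b\}$. The symmetry replaces each summand by $P\bigl(X_{[1, a + b]} = (\vec{u}, \vec{v}),\ T_n = k\bigr)$; the sum collapses, and by the i.i.d. factorization of the product measure we get $P\bigl(X_{[1, a]} = \vec{u}\bigr)\cdot P\bigl(X_{[1, b]} = \vec{v}\bigr)$, which by (1) equals $P\bigl(X_{[1, a]} = \vec{u}\bigr)\cdot P\bigl(X_{[T_n - b + 1, T_n]} = \vec{v}\bigr)$. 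This is exactly the asserted independence, together with the correct marginals; independence of the larger collection $X_1, \ldots, X_a, X_{T_n - b + 1}, \ldots, X_{T_n}$ then follows since the joint factors as a product over the two blocks and the marginals on each block come directly from the i.i.d.\ product measure.

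The main obstacle is justifying the step in which the residual event $\{T_n \geq b\}$ (respectively $\{T_n \geq a+b\}$) is absorbed into the probability. The hypothesis $f(n) \leq T_n$ is an almost-sure statement about the tail of $(T_n)_n$ rather than a pointwise statement about individual marginals, so one must argue either that we are restricting to the full-measure event on which $T_n$ already exceeds $f(n)$, or that the residual summands $P(X_{[1, b]} = \vec{v},\ T_n < b)$ are negligible under the constraint $a + b < f(n)$. Apart from this bookkeeping, the symmetry argument is entirely elementary and is really the whole content of the proof.
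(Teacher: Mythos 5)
Your proposal is correct and takes essentially the same approach as the paper: both decompose over the value of the vault index $T_n$, both invoke the measure-preserving coordinate permutations $S_\sigma$ and the invariance of $\{T_n = k\}$ under permutations of the first $k$ coordinates, and both collapse the sum using the hypothesis $T_n \geq f(n) > a+b$. The residual-term bookkeeping you flag at the end is handled in the paper simply by starting the sum at $\ell = f(n)$, which is exactly the ``restrict to the full-measure event'' resolution you propose.
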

\begin{proof}
Note that the assumption $a+b < f(n)$ guarantees the indices for the collection of random variables in the statement of the theorem are distinct. However, we must verify that the usage of the vault index $T_n$ in some of the indices did not introduce some subtle dependency amongst the collection of random variables which were a priori independent.

To prove (1), we use the partition of the sample space by the values of the vault index $T_n$:
$$P(X_{[T_n-(b-1),T_n]}=\vec{v}) = \sum_{\ell=f(n)}^{\infty} P(X_{[T_n-(b-1),T_n]}=\vec{v}, T_n=\ell-1).$$

Let $\sigma^l$ be the permutation $$\sigma^l:=(l-b,l-b+1,\dots,l-1,b+1,b+2,\dots,l-b-1,1,2,\dots,b),$$ i.e., the permutation that interchanges the block $[1,b]$ with the block $[l-b,l-1]$. Applying the measure preserving transformations $S_{\sigma^l}$ we obtain
\begin{align*}
P(X_{[T_n-(b-1),T_n]}=\vec{v})&=\sum_{\ell=f(n)}^{\infty} P(X_{[T_n-(b-1),T_n]}=\vec{v}, T_n=\ell-1)
\\&=\sum_{\ell=f(n)}^{\infty} P\left(S_{\sigma^l}(X_{[T_n-(b-1),T_n]}=\vec{v}, T_n=\ell-1)\right)
\\&=\sum_{\ell=f(n)}^{\infty} P(X_{[1,b]}=\vec{v}, T_n=\ell-1)
\\&=P(X_{[1,b]}=\vec{v}),
\end{align*}
as desired, where the second-to-last equality holds since the event $T_n=l-1$ is invariant under the transformation $S_{\sigma_l}$.

By (1), it follows that the collection of random variables $\{ X_{T_n}, \dots, X_{T_n-(b-1)} \}$ is independent since the collection $\{ X_1, \dots, X_b \}$ is independent. 
Thus to prove (2), it remains to show that 
$$
P(X_{[1,a]}=\vec{v}, X_{[T_n-(b-1), T_n]}=\vec{c}) = P(X_{[1,a]}=\vec{v}) P(X_{[1,b]}=\vec{c})
$$
for all $\vec{v}, \vec{c}$ vectors of suitable dimension.

We again use the partition by vault index $T_n=\ell-1$ to get:

$$
P(X_{[1,a]}=\vec{v}, X_{[T_n-(b-1), T_n]}=\vec{c})=\sum_{\ell=f(n)}^{\infty} P(X_{[1,a]}=\vec{v}, X_{[T_n-(b-1), T_n]}=\vec{c}, T_n=\ell-1).
$$

We now let $\pi^l$ be the permutation of the coordinates $1$ through $\ell-1$, which moves the last $b$ coordinates immediately after the first $a$-coordinates (shifting all other coordinates after the $a$th, by $b$ to the right):
$$\pi^l:=(1,\dots,a,l-b,l-b+1,\dots,l-1,a+b+1,a+b+2,\dots,l-b-1,a+1,a+2,\dots,a+b).$$
Since $S_{\pi^l}$ is a measure-preserving homeomorphism of the sample space, and since it also preserves the event $T_n=\ell-1$, we conclude the previous sum reduces to
\begin{align*}
\sum_{\ell=f(n)}^{\infty}& P(X_{[1,a]}=\vec{v}, X_{[T_n-(b-1), T_n]}=\vec{c}, T_n=\ell-1)\\&=
\sum_{\ell=f(n)}^{\infty} P\left(S_{\pi^l}(X_{[1,a]}=\vec{v}, X_{[T_n-(b-1), T_n]}=\vec{c}, T_n=\ell-1)\right)
\\&=\sum_{\ell=f(n)}^{\infty} P(X_{[1,a]}=\vec{v}, X_{[a+1, a+b]}=\vec{c}, T_n=\ell-1)
\\&=P(X_{[1,a]}=\vec{v}, X_{[a+1, a+b]}=\vec{c}),
\end{align*}
which completes the proof of (2) as we already know the variables $X_1, \dots, X_{a+b}$ are independent.

\end{proof}

\section{Proof of the m-completeness of random gap sequences}
%
In this section we give a proof of Theorem~\ref{thm: mainUnBDD}. 

Let the i.i.d. gap random variables $X_i$ take integer values $1 \leq s_1 < \dots < s_k < \dots $ with probability of taking value $s_j$ being $p_j > 0$, and 
generate the corresponding weights $W_n= \sum_{i=1}^{n} X_i$. We will further assume that $gcd(s_1, \dots, s_k,\dots )=1$ and that $p_2 > 0$, i.e., that there are at least two possible gap values. The general case follows immediately by dividing all the $s$'s by their greatest common divisor. Let $k\in\mathbb{Z}$ be such that $gcd(s_1,\dots,s_k)=1$. Such a finite index $k$ must necessarily exist.

As a first step, we obtain a bound on the maximum gap between the weights. 

\begin{lemma}
\label{lem:maxgap}
Let $X,X_1,X_2,\dots$ be i.i.d. random variables with moment generating function $M(t)=E[e^{tX}]$. Fix $a > 0$. If the radius of convergence of $M(t)$ is larger than $\frac 2a$, then with probability $1$ for any large enough $n$, we have $$\max(X_1,\dots,X_n)\leq a \log n.$$
\end{lemma}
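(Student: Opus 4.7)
The plan is to use a Chernoff-style tail bound together with a union bound and the first Borel–Cantelli lemma. Because the moment generating function $M(t)=E[e^{tX}]$ has radius of convergence strictly larger than $2/a$, I can choose some $t^{*}$ with $2/a<t^{*}<\rho$ (where $\rho$ is the radius of convergence), so that $M(t^{*})$ is a finite constant and, crucially, $t^{*}a>2$.

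First I would estimate the single-variable tail. By Markov's inequality applied to the nonnegative random variable $e^{t^{*}X}$,
$$P(X>a\log n)=P\bigl(e^{t^{*}X}>n^{t^{*}a}\bigr)\leq \frac{E[e^{t^{*}X}]}{n^{t^{*}a}}=\frac{M(t^{*})}{n^{t^{*}a}}.$$
Then a simple union bound over the $n$ i.i.d. copies gives
$$P\bigl(\max(X_1,\dots,X_n)>a\log n\bigr)\leq n\cdot\frac{M(t^{*})}{n^{t^{*}a}}=M(t^{*})\,n^{1-t^{*}a}.$$
Since $t^{*}a>2$, the exponent $1-t^{*}a<-1$, so $\sum_{n=1}^{\infty}M(t^{*})\,n^{1-t^{*}a}<\infty$.

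Next I would apply the first Borel–Cantelli lemma to the events $A_n=\{\max(X_1,\dots,X_n)>a\log n\}$. Summability of $P(A_n)$ yields $P(A_n\text{ i.o.})=0$, which is exactly the claim: with probability $1$, $\max(X_1,\dots,X_n)\leq a\log n$ for all sufficiently large $n$.

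There is really no main obstacle; the only thing one has to be careful about is choosing $t^{*}$ so that both $t^{*}<\rho$ (to keep $M(t^{*})$ finite) and $t^{*}a>2$ (to keep the summability exponent strictly less than $-1$). The hypothesis that the radius of convergence is \emph{strictly} greater than $2/a$ is precisely what opens up the required interval $(2/a,\rho)$ from which to pick $t^{*}$, so the argument goes through cleanly.
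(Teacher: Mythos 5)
Your proof is correct and follows essentially the same approach as the paper: a Markov/Chernoff bound on the tail $P(X>a\log n)$, a bound on the maximum by roughly $n$ times that tail, summability when the exponent is below $-1$, and Borel--Cantelli. The only cosmetic difference is that you use a union bound $P(\max>\cdot)\leq n\,P(X>\cdot)$ directly, whereas the paper writes $P(\max\geq\cdot)=1-(1-\varepsilon_n)^n$ and then expands; your version is a bit cleaner since it avoids the asymptotic expansion, but the substance is identical.
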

\begin{proof}
Using Markov's inequality we have for any $t > 0$ such that $M(t)$ converges, 
\begin{equation*}
\varepsilon_n=P(X_n\geq a \log n)	= P(tX_n\geq at \log n)=P(e^{tX_n}\geq e^{at\log n}=n^{at})
\leq \frac{E[e^{tX_n}]}{n^{at}}.
\end{equation*}
Thus we have $\varepsilon_n \rightarrow 0$ as $n\rightarrow\infty$. 
For the maximum, we have
\begin{align*}
P(\max\{X_1,\dots,X_n\}\geq a\log n)&=1-(P(X_i<a\log n))^n
\\&=1-(1-P(X\geq a\log n))^n
\\&=1-(1-\varepsilon_n)^n
\\&=n\varepsilon_n+O((n\varepsilon_n)^2).
\end{align*}
Since the radius of convergence of $M(t)$ is larger than $\frac 2a$, there exists a constant $c>0$ and $t>\frac 2a$ such that $\varepsilon_n\leq c n^{-at}$. Since $t>\frac 2a$ implies $at-1>1$, it follows that 
$\sum_n P(\max\{X_1,\dots,X_n\}\geq a\log n)$ is finite. Thus, by the Borel-Cantelli lemma with probability one $P(\max\{X_1,\dots,X_n\}\geq a\log n)$ only occurs finitely many times. It follows that with probability 1 for any large enough integer $n$, we have $\max\{X_1,\dots,X_n\}\leq a\log n$
\end{proof}

The proof of the main theorem relies on the following lemma which bounds the probability that two independent, identically distributed gap processes as above do not intersect in the first $n$ steps.

\begin{lemma}
\label{lem:indepcopies}
Let $\{X_i\}_{i\in\mathbb{N}}$, $W=\{W_i\}_{i\in\mathbb{N}}$ be as before and let $\{Y_i\}_{i\in\mathbb{N}}$ be such that $\{X_i,Y_i\}_{i\in\mathbb{N}}$ are i.i.d. positive, integer-valued random variables with gcd of possible values equal to 1. Let $s_1, s_2, \dots$ and $p_1, p_2, \dots$ denote the possible values and probabilities for the gap distribution. For $k\in\mathbb{N}$, let $V_k=\sum_{i=1}^k Y_i$, and let $V=\{V_i\}_{i\in\mathbb{N}}$. In particular, we also assume that there exists $a > 0$ such that for all large enough $n$, 
$max(X_1,\dots X_n, Y_1, \dots Y_n) \leq a \log n$. Then there exists a positive constant $c$ such that for any $n\in\mathbb{N}$ we have
$$P\left(W\cap V\cap[1,n]=\emptyset\right) \leq 2e^{-c\frac{n^{1+a(\log p_{*})/s_*}}{\log n}},$$
where $(-\log p_{*})/s_*=\inf_{i\in\mathbb{N}} (-\log p_i)/s_i$ and the constant $c$ may depend on the gap distribution values and probabilities but nothing else. 
\end{lemma}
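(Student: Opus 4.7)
I would condition on the whole $W$-sequence (using independence of $W$ and $V$) and then place $M \asymp n/\log n$ checkpoints $w_1 < w_2 < \cdots < w_M$ inside $W \cap [1,n/2]$, estimate for each one the conditional probability that $V$ hits $w_k$ exactly, and combine via the strong Markov property of the renewal process $V$. With $g = a\log n$, the max-gap assumption forces $|W \cap [1,n/2]| \geq n/(2g)$; I would choose a spacing $L$ of order $\log n$ slightly exceeding $g$ plus the Stamp-lemma threshold $n_0$ of the monoid $M(Spt(Y))$, and let $w_k$ be the smallest $W$-weight that is at least $kL$. This makes the spacings $w_{k+1}-w_k$ comparable to $L$ (between $L-g$ and $L+g$) and yields $M = \lfloor n/(2L) \rfloor \asymp n/\log n$ checkpoints.

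The central estimate is a uniform lower bound on the conditional probability $P(w_k \in V \mid V_{\tau_{k-1}}, W)$, where $\tau_k = \min\{j : V_j \geq w_k\}$. On the event that $w_{k-1} \notin V$, the overshoot satisfies $V_{\tau_{k-1}} \in (w_{k-1}, w_{k-1}+g)$, so the distance $d_k := w_k - V_{\tau_{k-1}}$ lies in the range $(L-2g,\,L+g)$; by the choice of $L$, $d_k \geq n_0$. Stamp's lemma then produces a non-negative integer representation $d_k = \sum_j a_j s_j$, and by taking a near-optimal representation that places most weight on the index $*$ minimizing $(-\log p_j)/s_j$ (with a bounded correction to fix the residue modulo $s_*$), the specific $V$-gap sequence is followed with probability at least $c_1 p_*^{d_k/s_*} = c_1 \exp(d_k(\log p_*)/s_*)$. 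Since $d_k = O(\log n)$, this is at least $c_2 n^{a(\log p_*)/s_*}$ for a constant $c_2$ depending only on the gap distribution.

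The strong Markov property applied at each $\tau_{k-1}$ then yields
\[
P\Bigl(\,\bigcap_{k=1}^M \{w_k \notin V\} \;\Big|\; W\Bigr) \;\leq\; \bigl(1 - c_2 n^{a(\log p_*)/s_*}\bigr)^{M} \;\leq\; \exp\!\Bigl(-c_2 M\, n^{a(\log p_*)/s_*}\Bigr),
\]
and plugging in $M \asymp n/\log n$ gives the claimed bound. The factor of $2$ in the lemma accommodates a small-probability event in which the $V$-overshoot behaves anomalously or a symmetric union bound swapping the roles of $W$ and $V$.

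\textbf{Main obstacle.} The delicate part is the uniform lower bound in the central estimate. The exponent $1 + a(\log p_*)/s_*$, rather than $1 + \beta a(\log p_*)/s_*$ for some $\beta > 1$, demands that the worst-case $d_k$ be as close to $a\log n$ as possible, with any Stamp-lemma overhead absorbed into the multiplicative constant $c_2$ rather than the exponent of $n$. Managing the interplay between the minimum spacing $L$ (forced by the Stamp threshold and the $V$-overshoot) and this tight control on $d_k$, while also verifying that the strong Markov property is being invoked legitimately at the stopping times $\tau_k$ given the conditioning on $W$, is the technical heart of the argument.
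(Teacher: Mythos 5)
Your strategy is genuinely different from the paper's. You target individual $W$-weights and make $V$ exactly hit each one, so the exponentially small factor $p_*^{d/s_*}$ is paid by $V$ making a precise jump of length $d$. The paper instead sets up ``$s_*$-sieves'': in $\asymp n/\log n$ disjoint windows it demands that $W$ have roughly $a\log n/s_*$ consecutive gaps all equal to $s_*$, so that within the window $W$ occupies an entire residue class mod $s_*$; the $p_*^{\,a\log n/s_*}$ cost is paid once (in $W$), and once $V$ enters the sieve, a \emph{constant-probability} fixed gap pattern (one that realizes all residues mod $s_*$, obtained from the Stamp lemma) guarantees a collision.

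However, there is a genuine gap in your per-checkpoint estimate. With fixed checkpoints at $w_k\approx kL$, the worst-case distance $d_k=w_k-V_{\tau_{k-1}}$ lies in $(L-2g,\,L+g)$ with $g=a\log n$. For $d_k$ to be positive (so that $V_{\tau_{k-1}}$ has not already passed $w_k$) you must take $L\geq 2g+n_0$, not $L\approx g+n_0$ as you propose: with your $L$, $d_k$ can be negative once $g>n_0$. With the corrected $L\geq 2g+n_0$, the worst-case $d_k$ is about $3g$, and the per-checkpoint hit probability you can guarantee uniformly (as the strong Markov product requires) is only $c_1 p_*^{3g/s_*}=c_1 n^{3a\log p_*/s_*}$. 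This yields an exponent of $1+3a(\log p_*)/s_*$, not the claimed $1+a(\log p_*)/s_*$, and would weaken the radius-of-convergence hypothesis in Theorem~\ref{thm: mainUnBDD} by a factor of~3. You flag this as ``the technical heart of the argument'' but the fixed-spacing scheme cannot reach the stated exponent: the spacing forced by positivity of $d_k$ is exactly what inflates the worst-case jump. One fix within your framework is to make the checkpoints adaptive: after observing $V_{\tau_{k-1}}$, target the smallest $W$-weight at least $n_0$ past $V_{\tau_{k-1}}$, so $d_k\leq n_0+g$ always; consecutive targets advance by at most $n_0+2g$, so you still get $\asymp n/\log n$ independent attempts and the correct exponent. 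The paper's sieve construction is a different way of achieving the same effect: it makes the window in which $V$ must land have length only $\asymp g$ (matching the maximal $V$-overshoot), rather than requiring $V$ to land on a single point from up to $3g$ away.
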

\begin{proof}
To simplify notation, throughout the the proof we will assume $p_1=p_{*}$, $s_1=s_*$. If $p_i=p_{*}$, then everywhere in the proof $s_1$ should be replaced by $s_i$ and $p_1$ by $p_i$. If the 
infimum is not achieved, we make the argument for each $i$ to get the result.

Since we have $\gcd(s_1, s_2,\dots )=1$ we also have $\gcd(s_1,\dots,s_k)=1$ for some $k$ and we may take $k$ large enough so that $k \geq *$. Then there exist positive integers $\alpha_2,\dots,\alpha_k$ such that $\sum_{i=2}^k\alpha_is_i\equiv 1\mod s_1$. This is because by the Stamp Lemma~\ref{lem: stamp lemma}, all large enough positive integer multiples of $gcd(s_2, \dots, s_k)$ are of the form $\sum_{i=2}^k \alpha_i s_i$ with $\alpha_i \in \mathbb{Z}_+$, and at least one of these multiples is congruent to $1$ mod $s_1$, as \\ $gcd(s_1, s_2,\dots,s_k)=1$. Let $\ell=\sum_{i=2}^k \alpha_i$ and let $I=\{i_1,\dots,i_{\ell s_1}\}$ be an integer sequence of length $\ell s_1$ and with period $s_1$, such that the first $\alpha_2$ terms are $2$, the next $\alpha_3$ terms are $3$, and so on. From the choice of the sequence $I$, it follows that for any integer $a$, we have 
\begin{equation}
\label{eq:saturatemods1}
\left\{\left(a+\sum_{q=1}^r s_{i_q}\right) \mod s_1\right\}_{1\leq r\leq \ell s_1}=\mathbb{Z}/s_1\mathbb{Z}.
\end{equation}
Let $\bar{S}=(s_{i_1},\dots,s_{i_{\ell s_1}})$ and define $\bar{p}=(p_2^{\alpha_2} \dots p_k^{\alpha_k})^{s_1} \in (0,1)$ to be the probability that $\ell s_1$ successive gaps are given by $\bar{S}$. Let $j=\left\lceil \frac{\sum_{q=1}^{\ell s_1} s_{i_q}}{s_1} \right\rceil + 1$, $\bar{j}=\lceil a\log n \rceil+js_1$ and $r=\lceil \frac{\bar{j}}{s_1} \rceil$. Let $\bar{q}=p_1^{r} > 0$ be the probability that $r$ successive gaps are equal to $s_1$. Note that $s_1 (j-1) \geq \sum_{q=1}^{\ell s_1} s_{i_q}$, and that $j$ and $\bar{p}$ only depend on the gap distribution and not on $n$.

These parameters are designed so that if there are $r$ successive positive integers with consecutive gaps equal to $s_1$ (we will call such a sequence an ``$s_1$-sieve''), then if we take the smallest weight achieved in the sieve (which is at most $a \log(n)$ from the start of the sieve by our assumption on gaps), then there is enough room in the sieve so that if the $\bar{S}$-pattern is achieved starting at this smallest weight, then this pattern will lie completely in the sieve.

We first make an argument that the $W$-weight sequence will achieve a reasonable number of independent (disjoint) $s_1$-sieves in $[1,n]$ with relatively high probability.

Divide the interval $[1,n]$ into sections of length $3\bar j$ each, where the $i$'th section is $((i-1)3\bar j,i3\bar j]$.  There will be $\tau=\lfloor \frac{n}{3\bar j}\rfloor$ sections. Now divide each of these sections into 3 subsections of equal length $\bar{j}$. Let $f_i$ be the index of the first weight $W$ in the $i$'th section, and let $l_i$ be the index of the first weight $W$ in the third (last) subsection of the $i$'th section. Note that almost surely there are such weights, since the largest gap is at most $a\log n<\bar j$ almost surely. Since the $f_i$'s and $l_i$'s are stopping times, by the strong Markov property we have that $X_{f_1},X_{f_1+1},\dots,X_{l_1},X_{f_2},X_{f_2+1},\dots,X_{l_2},\dots,X_{l_\tau}$ are independent and identically distributed with the same distribution as $X_1$. 

Define the $r$-tuples $\bar{X}_i$ as follows. If $l_i> f_i+r$, then $$\bar{X}_i=(X_{f_i},X_{f_i+1},\dots,X_{f_i+r-1}),$$ otherwise, $\bar{X}_i=(0,0,\dots,0)$. The sequence $\bar{X}_1,\dots,\bar{X}_\tau$ is i.i.d. From the definitions of the random sequences $\{l_i\}$ and $\{f_i\}$, it follows that for any index $i$, we have 
\begin{align*}
P(\bar{X}_i=(s_1,\dots,s_1))=&P(\bar{X}_i=(s_1,\dots,s_1)|l_i-f_i>r)P(l_i-f_i>r)\\&+P(\bar{X}_i=(s_1,\dots,s_1)|l_i-f_i\leq r)P(l_i-f_i\leq r)
\\=&P(\bar{X}_i=(s_1,\dots,s_1)|l_i-f_i>r)P(l_i-f_i>r)
\\=&P(\bar{X}_i=(s_1,\dots,s_1),l_i-f_i>r)
\\=&P(X_{f_i}=X_{f_i+1}=\dots=X_{f_i+r-1}=s_1)=p_1^r=\bar q.
\end{align*}

Thus, the expected number of occurrences of $s_1$-sieves $(\underbrace{s_1,\dots,s_1}_r)$ among the $\tau$ copies of $\bar{X}_i$'s is $\tau\bar q$ and these sieves are automatically disjoint and hence independent by construction.

It follows from Chernoff's inequality that the probability that the sequence $(\underbrace{s_1,\dots,s_1}_r)$ occurs at least $t:=\lceil \tau \bar{q}/2 \rceil$ times among the first $\tau$ of the $\bar{X}_i$'s is at least $1-e^{-\tau \bar q/8}$. Assuming this occurs, let $h_1,\dots,h_t\leq \tau$ be indices such that $\bar{X}_{h_i}=(s_1,\dots,s_1)$. Define $T_1,\dots,T_t$ by $T_i=\inf\{k | V_k \geq W_{f_{h_i}}\}$. 
These are hence independent stopping times in which the 2nd weight sequence enters these selected $s_1$-sieves of the first weight sequence.


Suppose $1\leq q\leq t$ is such that $(Y_{T_q+1},\dots,Y_{T_q+\ell s_1})=\bar{S}$. For a given $q$, the probability of this event is $\bar{p}$. Note also that when this happens, from the definitions of $T_q$, $\bar{S}$ and $\bar j$ we have $$V_{T_q+\ell s_1} = V_{T_q-1}+Y_{T_q}+ \sum_{q=1}^{\ell s_1} s_{i_q} \leq W_{f_{h_q}}+a\log n + (j-1)s_1 < W_{l_{h_q}}.$$ Thus, we have 
$$W_{f_{h_q}}\leq V_{T_q+1} \leq  V_{T_q+\ell s_1}\leq W_{l_{h_q}} \leq n.$$
Since for any $f_{h_q}\leq g< l_{h_q}$ the gap $W_{g+1}-W_g$ is $s_1$, all $W_g$ in this range give the same congruence class $\kappa$ mod $s_1$, and conversely every integer between $W_{f_{h_q}}$ and $W_{l_{h_j}}$ which is congruent to $\kappa$ mod $s_1$  is of the form $W_g$ for one of these indices.

It follows from \eqref{eq:saturatemods1}, and since the sequence $\{ V_{T_q}, \dots, V_{T_q+\ell s_1} \}$ has associated gap sequence $\bar{S}$, that there exists an index $T_q + 1 \leq g_0\leq T_q+\ell s_1$ such that $V_{g_0}\equiv \kappa \mod s_1$. It thus further follows that there exists an index $f_{h_q}\leq g_1< l_{h_q}$ such that $W_{g_1}=V_{g_0}$. This implies 
$W \cap V\cap[1,n] \neq \emptyset$. 

Since the $X$'s and $Y$'s are independent, the $T_i$'s are stopping times and the collections of random variables $\{Y_{T_i+1},\dots,Y_{T_i+\ell s_1}\}_{i\leq t}$ are independent, we obtain
$$P(W\cap V\cap[1,n]=\emptyset)<e^{-\tau\bar q/8}+(1-\bar{p})^t<2e^{-c_0 \tau\bar q},$$
where $c_0 > 0$ is a constant, and the last inequality holds since $\bar p$ is constant and $t=\lceil \tau \bar{q}/2 \rceil$. Using the definitions of $\tau$ and $\bar q$, we have 
$$\tau\bar q \geq c_1\frac{n}{\log n}p_1^{a\log n/s_1}=c_1\frac{n^{1+a\log p_1/s_1}}{\log n}$$ 
for some positive constant $c_1$. Setting $c=c_0c_1 > 0$ completes the proof of the lemma.

\end{proof}

From the proof of the lemma it follows easily that the following slight generalization also holds.

\begin{corollary}
\label{cor:shiftedIndepCopiesIntersect}
Let the random variables $X_i,Y_i,W_i,V_i$, $i\in\mathbb{N}$ be as in Lemma \ref{lem:indepcopies}. Then for any integers $a,b,\kappa=o(n)$ and any positive constant $d$, there exists a positive constant $c$ such that
$$P((a+W)\cap (b+V)\cap[\kappa,d n]=\emptyset)<2e^{-c\frac{n^{1+a(\log p_{*})/s_*}}{\log n}}.$$
\end{corollary}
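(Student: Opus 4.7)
The plan is to re-run the proof of Lemma~\ref{lem:indepcopies} with cosmetic adjustments to accommodate the shifts $a,b$ and the window $[\kappa,dn]$ in place of $[1,n]$. Since $a$ and $b$ are (at worst) $o(n)$ shifts and the constant $a$ appearing in the exponent is the same max-gap constant used in the lemma, translating the walks has no effect on the distribution of gaps: $\{a+W_i\}$ and $\{b+V_i\}$ are still partial-sum walks driven by the same i.i.d. gap sequences, merely translated. Because $\kappa=o(n)$ and $d>0$ is fixed, the effective window $[\kappa,dn]$ has length $\Theta(n)$, so the leading order of the bound is preserved.

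First I would introduce the first-entry stopping times $f_0:=\inf\{i : a+W_i\geq\kappa\}$ and $g_0:=\inf\{i : b+V_i\geq\kappa\}$. By the strong Markov property, the gap sequences after these indices remain i.i.d.\ with the original distributions, and the two families stay mutually independent, so one may argue as if the walks were restarted just above $\kappa$. Mirroring the lemma, I would then partition $[\kappa,dn]$ into $\tau:=\lfloor(dn-\kappa)/(3\bar j)\rfloor$ sections of length $3\bar j$; since $\kappa=o(n)$ and $d$ is constant, we have $\tau=\Theta(n/\log n)$, matching the order from the lemma. The rest of the argument is verbatim: a Chernoff bound produces $t:=\lceil\tau\bar q/2\rceil$ disjoint $s_1$-sieves in the shifted $W$-walk with probability at least $1-e^{-\tau\bar q/8}$, and for each sieve the event that $b+V$ enters the sieve and has its next $\ell s_1$ gaps match the pattern $\bar S$ has probability $\bar p$ independently across sieves, yielding an intersection in $[\kappa,dn]$ whenever any of these matches occurs.

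Combining these observations exactly as in the lemma gives
$$P\bigl((a+W)\cap(b+V)\cap[\kappa,dn]=\emptyset\bigr)\leq e^{-\tau\bar q/8}+(1-\bar p)^t\leq 2e^{-c_0\tau\bar q},$$
and the estimate $\tau\bar q\geq c_1\,n^{1+a(\log p_*)/s_*}/\log n$ follows from the same computation as in the lemma, with $c_0,c_1$ now allowed to depend on $d$. The main subtlety is applying the strong Markov property carefully so that the dependence on $\kappa$, $a$, and $b$ at the beginning of each walk is fully absorbed, and verifying that the max-gap bound used in the lemma remains valid on the slightly longer index range needed to reach $dn$ (which it does, at the cost of a $\log d$ in the constants). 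Everything else is bookkeeping, so I do not expect any serious obstacle beyond these verifications.
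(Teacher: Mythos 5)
Your proposal is correct and takes the same route as the paper, which simply asserts that the corollary ``follows easily'' from the proof of Lemma~\ref{lem:indepcopies}: you re-run that proof with the window $[\kappa,dn]$ partitioned into $\Theta(n/\log n)$ sections and absorb the shifts $a,b$ and the factor $d$ into the constants, noting correctly that the sieve-plus-pattern argument is translation-invariant (the $\bar S$-pattern saturates all residues mod $s_1$ regardless of the sieve's residue class) and that the max-gap bound only changes by an additive $a\log d$. Your observation that $a$ in the exponent is the max-gap constant of Lemma~\ref{lem:maxgap} rather than the shift (an unfortunate notational collision in the statement) is the right reading, matching how the corollary is invoked in the proof of Theorem~\ref{thm: mainUnBDD}.
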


We are now ready to prove the main theorem.

\begin{proof}[Proof of Theorem \ref{thm: mainUnBDD}]

Fix an integer $m \geq 2$.

For each $n \in \mathbb{Z}_+$, let $A_n$ denote the event that $n$ is not the sum of $m$ distinct weights in our process. 
If we can show that $\sum_{n=1}^{\infty} P(A_n) < \infty$ then the Borel-Cantelli lemma will imply that with probability $1$, only a finite number of the events $A_n$ occur, and hence that 
all sufficiently large integers $n$ must be a sum of $m$ distinct weights. Thus, we seek good upper bound estimates for $P(A_n)$. 

It follows from Lemma \ref{lem:maxgap} that almost surely $\max(X_1,\dots,X_n)\leq a \log n$ for any large enough $n$. Going forward, we will assume that indeed this is the case.

If $n$ is not the sum of $m$ weights, then for any $l$ we should have that $$n-(W_1+\dots+W_{m-2})-W_l$$ is not equal to a weight $W_r$. Given $n$, let $T_n$ be the index of the largest weight less than or equal to  $n-(W_1+\dots+W_{m-2})$:
$$T_n:=\max\{i:W_i\leq n-(W_1+\dots+W_{m-2})\},$$
and let $G_n$ be the gap:
$$G_n=n-(W_1+\dots+W_{m-2})-W_{T_n}.$$
By the definition of $T_n$, we have $0\leq G_n\leq a\log n$.

Consider the sequences $\{Q_i\}_{1\leq i< T_n}$ and $\{R_i\}_{1\leq i< T_n}$ defined by $$Q_i=X_{T_n}+X_{T_n-1}+\dots+X_{T_n-i+1}$$ and $$R_i=G_n+Q_i,$$ 
respectively. 
Using the definition of $G_n$ we have
\begin{align*}
R_i&=n-(W_1+\dots+W_{m-2})-W_{T_n}+X_{T_n}+X_{T_n-1}+\dots+X_{T_n-i+1}
\\&=n-(W_1+\dots+W_{m-2})-(W_{T_n}-X_{T_n}-X_{T_n-1}-\dots-X_{T_n-i+1})
\\&=n-(W_1+\dots+W_{m-2})-W_{T_n-i}.
\end{align*}
Suppose there are indices $r$ and $i$ such that $\frac n3\geq R_i=W_r>m a\log n$. Then we have 
$$n=W_1+\dots+W_{m-2}+W_{T_n-i}+W_r.$$
Since $W_r>ma\log n$, we have $r>m$. Moreover, $\frac n3\geq W_r$ and $\frac{n}{3} > ma \log n$ implies $$W_{T_n-i}\geq\frac{2n}3-(W_1+\dots+W_{m-2})\geq \frac{2n}3-(m-2)a\log n>W_r.$$ Thus, $m<r<T_n-i$ and we have that $n$ is the sum of $m$ distinct weights. It follows that 
\begin{equation*}
P(A_n)\leq P\left(W\cap \{R_i\}_{1\leq i < T_n}\cap\left(ma\log n,\frac n3\right]=\emptyset\right).
\end{equation*}
Since $G_n\leq a\log n$, we obtain 
\begin{equation}
\label{eq:AnVSdistinctEnds}
P(A_n)\leq \sum_{b=1}^{a\log n}P\left(W\cap \{b+Q_i\}_{1\leq i < T_n}\cap\left(ma\log n,\frac n3\right]=\emptyset\right).
\end{equation}

It follows from Theorem \ref{thm:endsIndependent} that the process $\{Q_i\}_i\cap \left[0,\frac n3\right]$ is an independent copy of the process $\{W_i\}_i\cap \left[0,\frac n3\right]$. 
Thus, Corollary \ref{cor:shiftedIndepCopiesIntersect} applies, and using it for each summand in \eqref{eq:AnVSdistinctEnds}, we obtain that that there exists a positive constant $c$ such that 
$$P(A_n)\leq 2a\log n e^{-c\frac{n^{1+a(\log p_{*})/s_*}}{\log n}}.$$

Thus, as long as $1+a (\log p_{*})/s_* >0$, we have $\sum_{n=1}^\infty P(A_n)<\infty$, so by the Borel-Cantelli lemma, with probability $1$, only finitely many positive integers $n$ cannot be written as a sum of $m$ distinct weights.

This requires $a < \frac{s_*}{-\log(p_*)}$, and hence that the gap moment generating function $M(t)$ has radius of convergence larger than $\frac{-2\log(p_*)}{s_*}$ in order to use Lemma~\ref{lem:maxgap}.
\end{proof}

Note that this last theorem applies to the situation where the gaps have (shifted) Poisson distribution i.e. $1+X$ with $X\sim\text{Poisson}(\lambda)$, since in that case, the moment generating function has infinite radius of convergence.

In the case of gaps with geometric distribution with parameter $p > 0$, we have $s_k=k$ and $p_k=(1-p)^{k-1}p$ for all $k \geq 1$. It is then easy to compute that 
$$-2\log{p_*}/s_*= max(-2\log(p),-2\log(1-p)),$$ and that the radius of convergence of $M(t)$ is $-\log(1-p)$. Thus, in order for the theorem to apply, we need $p \geq 0.5$ and $-\log(1-p) > -2\log(p)$ which translates to $p^2 + p - 1 >0$ which holds exactly when $p > \frac{-1+\sqrt{5}}{2}$.

\section{A counterexample}
\label{sec:counter}
For any natural number $m$, allowing the gaps to be large with high enough probability will make it impossible to write every large enough integer as the sum of $m$ weights. For example, let the gap distribution be $P(X=2^k)=p^k(1-p)$ for $k\geq 0$ with $p=2^{-\frac{1}{m+1}}$. Then we have 
$$P(X>n^{m+1})=P(X>2^\frac{(m+1)\ln n}{\ln 2})\approx p^\frac{(m+1)\ln n}{\ln 2}=\frac{1}{n}.$$
Thus, $\sum_n P(X_n>n^{m+1})=\infty$. Since the gaps $X_1,X_2,\dots$ are independent, by the second Borel-Cantelli lemma, we have that with probability 1, $X_n>n^{m+1}$ occurs infinitely often. Let $n$ be such that $X_n>n^{m+1}$. Since $W_n \geq X_n$, we have that $W_n>n^{m+1}$. Thus, for any integer $k<n^{m+1}$, if $k$ is the sum of $m$ weights $W_{i_1}+\dots +W_{i_m}$, then we must have $i_1,\dots,i_m<n$. However, there are at most $n^m$ distinct integers among the numbers $W_{i_1}+\dots +W_{i_m}$ with $i_1,\dots,i_m<n$, thus at most $n^m$ of the integers less than $n^{m+1}$ can be written as the sum of $m$ weights. Thus, there is an integer $\geq$ $n^{m+1}-n^{m}-1$ which cannot be written as a sum of $m$ distinct weights. As this holds for arbitrarily large $n$, we are done.

Note that in this counterexample, the gap distribution has finite $\alpha$-moment for any $0 < \alpha < \frac{1}{m+1}$ but the $\frac{1}{m+1}$-th moment is infinite.

\section{Appendix}

The following stamp lemma shows that the monoid generated by the $s_1, \dots, s_k$ coincides with the group generated by them for large enough integers. 
It is called the stamp lemma, as the set of postage values that  one could make using only stamps of valuations $s_1, \dots, s_k$ is exactly the monoid they generate, and 
so this lemma characterizes all large enough postages that can be made.

\begin{lemma}[The Stamp Lemma]
\label{lem: stamp lemma}
Let $1 \leq s_1 < s_2 < s_3 < \dots < s_k$ be a finite sequence of positive integers with $d=gcd(s_1,\dots, s_k)$, then 
there exists $n_0 \in \mathbb{Z}_+$ such that $M(s_1, \dots, s_k)$ contains all $nd$ for $n \geq n_0$. Furthermore, there is also an $n_0'$ such that for every $n \geq n_0'$, $nd=\sum_{i=1}^k \alpha_i s_i$ for some positive integers $\alpha_i$.
\end{lemma}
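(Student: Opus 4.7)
The plan is to reduce to the case $d=1$ and then argue via residues modulo $s_1$. Since $M(s_1, \ldots, s_k) = d \cdot M(s_1/d, \ldots, s_k/d)$ and $gcd(s_1/d, \ldots, s_k/d) = 1$, I will first replace each $s_i$ by $s_i/d$ and thereby reduce to proving that when $gcd(s_1, \ldots, s_k) = 1$, all sufficiently large positive integers lie in $M(s_1, \ldots, s_k)$.

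The heart of the argument is to show that every residue class modulo $s_1$ is attained by some element of the monoid. Because $gcd(s_1, s_2, \ldots, s_k) = 1$, the images $\overline{s_2}, \ldots, \overline{s_k}$ generate $\mathbb{Z}/s_1\mathbb{Z}$ as a group. In a finite abelian group, the submonoid and the subgroup generated by any subset coincide, since $-g = (|G|-1)g$ is already a nonnegative integer combination. Hence for each residue $r \in \{0, 1, \ldots, s_1-1\}$ there exist nonnegative integers $\alpha_2^{(r)}, \ldots, \alpha_k^{(r)}$ with $m_r := \sum_{i \geq 2} \alpha_i^{(r)} s_i \equiv r \pmod{s_1}$, and by construction $m_r \in M(s_1, \ldots, s_k)$. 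Setting $n_0 := \max_r m_r$, any integer $n \geq n_0$ whose residue modulo $s_1$ is $r$ satisfies $n - m_r \geq 0$ and is a nonnegative multiple of $s_1$, so $n = m_r + \frac{n - m_r}{s_1} \cdot s_1 \in M(s_1, \ldots, s_k)$, which gives the first assertion.

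For the strict positivity statement, I would deduce it from the first part via a translation trick. Let $S := s_1 + \cdots + s_k$, which is a positive multiple of $d$. For any $n \geq n_0 + S/d$, the quantity $nd - S = (n - S/d)d$ is a nonnegative multiple of $d$ that is at least $n_0 d$, so by the first part it admits a representation $\sum \beta_i s_i$ with $\beta_i \geq 0$. Adding $S$ back yields $nd = \sum (\beta_i + 1) s_i$ with each coefficient strictly positive, establishing $n_0' := n_0 + S/d$ as a valid bound. I do not anticipate any significant obstacle in executing this plan: the only conceptual point is the observation that in a finite abelian group a subgroup coincides with the submonoid it generates, and everything else is routine bookkeeping.
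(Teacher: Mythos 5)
Your proof is correct, but it takes a genuinely different route from the paper's. The paper also reduces to $d=1$, but then invokes Bezout's identity $1 = \sum_i a_i s_i$ directly, splits the coefficients into negative and nonnegative blocks, and performs a sequence of $s_1$ explicit ``swaps'' starting from a cleverly chosen seed value to exhibit $s_1$ consecutive integers in the monoid; once $s_1$ consecutive integers are present, repeatedly adding $s_1$ fills in everything above. Your argument instead works residue-by-residue modulo $s_1$: you show that each class in $\mathbb{Z}/s_1\mathbb{Z}$ is hit by some element of the monoid, using the neat observation that in a finite abelian group the submonoid and the subgroup generated by a set coincide (since $-g = (|G|-1)g$), and then you lift by nonnegative multiples of $s_1$. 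This is essentially the Ap\'ery-set viewpoint on the Frobenius problem, and it buys a cleaner conceptual picture at the cost of being slightly less elementary (it leans on Lagrange's theorem rather than only on Bezout). The paper's swapping construction, by contrast, is more hands-on and stays entirely within linear algebra over $\mathbb{Z}$. Both are valid; your treatment of the strict-positivity clause by translating by $s_1 + \cdots + s_k$ is the same trick the paper uses.
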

\begin{proof}
Let $\langle s_1,\dots,s_k\rangle$ be the subgroup of $(\mathbb{Z},+)$ that the sequence generates. Then, as every subgroup of $Z$ is cyclic, 
$\langle s_1, \dots, s_k\rangle = \langle D\rangle $ for some $D > 0$. As $s_j \in \langle D\rangle $, $D$ is a common divisor of all the $s_j$. On the other hand, as 
$D \in \langle s_1, \dots, s_k\rangle $, we have integers $m_i$ such that $D = \sum_{i=1}^k m_i s_i$. From this last equation (Bezout's equation), we see that any common divisor 
of the $s_i$ must divide $D$ and hence $D=d=gcd(s_1, \dots, s_k)$.

Dividing Bezout's equation by $D=d$, we see that $1=\sum_{i=1}^k m_i \frac{s_i}{d}$ from which it follows that $\langle \frac{s_1}{d}, \dots, \frac{s_k}{d}\rangle =\langle 1\rangle $, and so to prove the lemma, it is sufficient 
to show that there is an $n_0$ such that for all $n \geq n_0$, $n \in M(\frac{s_1}{d}, \dots, \frac{s_k}{d})$, i.e., it is enough to prove the lemma in the case where the $s_i$ are relatively prime, and so 
we assume that for the remainder of the proof.

Now, as $s_1$ is the smallest of the $s_j$, it will follow that $M(s_1, \dots, s_k)$ has all sufficiently large integers once we show that $s_1$ consecutive integers are in this monoid. (As all further integers can then be obtained from these by repeatedly adding $s_1$). Also note that if $s_1=1$ the result is trivial so WLOG $1 < s_1$.

As $s_1, s_k$ are relatively prime, by Bezout, there exist integers $a_1,\dots,a_k$ such that $1 = \sum_{i=1}^ka_is_i$. Let $I_1:=\{i:a_i<0\}$ and $I_2:=\{i:a_i\geq 0\}$, and let $b_i:=-a_i> 0$ for all $i\in I_1$. We have 
\begin{equation}
\label{eq:stamp_replace}
\sum_{i\in I_2}a_is_i=1+\sum_{i\in I_1}b_is_i.
\end{equation}

Let $n_0=\sum_{i\in I_1}s_1b_is_i$. Since $b_i>0$ for all $i\in I_1$, we have $n_0\in M(s_1,\dots,s_k)$. It follows from \eqref{eq:stamp_replace} that swapping $b_i$ copies of $s_i$ for all $i\in I_1$ by $a_i$ copies of $s_i$ for all $i\in I_2$ will increase the sum by $1$. Since the coefficients of $s_i$ for $i\in I_1$ are $s_1b_i$, and since $a_i\geq 0$ for all $i\in I_2$, we can make $s_1$ such swaps and after each swap still get an integer from the monoid $M(s_1,\dots,s_k)$. Since each swap increases the number by $1$, we obtain that $n_0,n_0+1,\dots,n_0+s_1$ are all in $M(s_1, \dots, s_k)$. It follows from our previous comments that $M(s_1, \dots, s_k)$ contains all large positive integer multiples of $d$.

Finally, if every $n \geq n_0$ has $nd=\sum_{i=1}^k f_i s_i$ for $f_i$ nonnegative integers, we may set $s_i=t_id$ for $1 \leq i \leq k$ and note that by adding $s_1 + \dots + s_k$ to both sides of the last sum, we get that for $n \geq n_0 + \sum_{i=1}^k t_i$,  $nd=\sum_{i=1}^k(f_i+1)s_i$ is a linear combination of the $s_i$ with positive integer coefficients.

\end{proof}


\begin{thebibliography}{20}

\bibitem{Brown} J.L. Brown, \emph{Note on Complete Sequences of Integers}, The American Math. Monthly 68 (6) (1961), 557-560.

\bibitem{Durrett} R. Durrett, \emph{Probability: Theory and examples, 4th edition}, Cambridge University Press, 2010.

\bibitem{Helfgott} H. Helfgott, \emph{The ternary Goldbach conjecture is true}, (2013), arXiv: 1312.7748 

\bibitem{Hons} R. Honsberger, \emph{Mathematical Gems III}, Math. Assoc. Amer., (1985), 123-128.

\bibitem{PW} J. Pakianathan and T. Winfrey, \emph{Threshold complexes and connections to number theory}, Turk. J. Math., {\bf 37}, Issue 3, (2013), 511-539. 

\bibitem{Riddell} J. Riddell, \emph{Partitions into distinct small primes}, Acta Arithmetica {\bf 41}, (1982), 71-84.

\bibitem{TV} T. Tao and V. Vu, \emph{Additive Combinatorics}, Cambridge University Press, 2010.

\bibitem{Vino} I.M. Vinogradov, \emph{}, Dokl. Akad. Nauk SSSR {\bf 15} (1937), 291-294.


\vskip.125in

\end{thebibliography}
\end{document}